\numberwithin{equation}{section}
\newtheorem{Theorem}{Theorem}[section]
\newtheorem{lem}[Theorem]{Lemma}
\newtheorem{prop}[Theorem]{Proposition}
\newtheorem{Corollary}[Theorem]{Corollary}
{ \theoremstyle{definition}
\newtheorem{Definition}[Theorem]{Definition}
\newtheorem{Remark}[Theorem]{Remark} }
\DeclareMathOperator{\pa}{p-area}
\begin{document}


\newcommand{\arXivNumber}{1509.00950}

\renewcommand{\PaperNumber}{097}

\FirstPageHeading

\ShortArticleName{An Application of the Moving Frame Method}

\ArticleName{An Application of the Moving Frame Method\\ to Integral Geometry in the Heisenberg Group}

\Author{Hung-Lin CHIU~$^\dag$, Yen-Chang HUANG~$^\ddag$ and Sin-Hua LAI~$^\dag$}

\AuthorNameForHeading{H.-L.~Chiu, Y.-C.~Huang and S.-H.~Lai}

\Address{$^\dag$~Department of Mathematics, National Central University, Chung Li, Taiwan}
\EmailD{\href{mailto:hlchiu@math.ncu.edu.tw}{hlchiu@math.ncu.edu.tw}, \href{mailto:972401001@cc.ncu.edu.tw}{972401001@cc.ncu.edu.tw}}

\Address{$^\ddag$~School of Mathematics and Statistics, Xinyang Normal University, Henan, P.R.~China}
\EmailD{\href{mailto:ychuang@xynu.edu.cn}{ychuang@xynu.edu.cn}}

\ArticleDates{Received March 09, 2017, in f\/inal form December 09, 2017; Published online December 26, 2017}

\Abstract{We show the fundamental theorems of curves and surfaces in the 3-dimensional Heisenberg group and f\/ind a complete set of invariants for curves and surfaces respectively. The proofs are based on Cartan's method of moving frames and Lie group theory. As an application of the main theorems, a Crofton-type formula is proved in terms of p-area which naturally arises from the variation of volume. The application makes a connection between CR geometry and integral geometry.}

\Keywords{CR manifolds; Heisenberg groups; moving frames}

\Classification{53C15; 53C65; 32V20}

\section{Introduction}
In Euclidean spaces, the fundamental theorem of curves states that any unit-speed curve is completely determined by its curvature and torsion. More precisely, given two functions $k(s)$ and $\tau (s)$ with $k(s)>0$, there exists a unit-speed curve whose curvature and torsion
are the functions $k$ and $\tau $, respectively, uniquely up to a Euclidean rigid motion. We present the analogous theorems of curves and surfaces in the $3$-dimensional Heisenberg group $H_1$. The structure of the group of transformations in $H_1$, which is similar to the group of rigid motions in Euclidean spaces, is also studied. Moreover, we develop the concept of the geometric invariants for curves and surfaces in the above sense. It should be emphasised that owning such invariants helps us to understand the geometric structures in CR manifolds and to develop the applications to integral geometry.

We give a brief review of the Heisenberg group. All the details can be found in \cite{CCG, CHMY2}. The Heisenberg group~$H_1$ is the space $\mathbb{R}^{3}$ associated with the group multiplication
\begin{gather*}
(x_{1},y_{1},z_{1})\circ (x_{2},y_{2},z_{2})=(x_{1}+x_{2},y_{1}+y_{2},z_{1}+z_{2}+y_{1}x_{2}-x_{1}y_{2}),
\end{gather*}
which is also a $3$-dimensional Lie group. The standard left-invariant vector f\/ields in $H_1$
\begin{gather*}
\mathring{e}_{1}=\frac{\partial }{\partial x}+y\frac{\partial }{\partial z},\qquad \mathring{e}_{2}=\frac{\partial }{\partial y}-x\frac{\partial }{\partial z},\qquad \text{and}\qquad T=\frac{\partial }{\partial z}
\end{gather*}
form a basis of the vector space of left-invariant vector f\/ields, where $(\frac{\partial}{\partial x},\frac{\partial}{\partial y},\frac{\partial}{\partial z})$ denotes the standard basis in~$\mathbb{R}^3$. The standard contact bundle $\xi:=\operatorname{span}\{\mathring{e}_1,\mathring{e}_2\}$ in $H_1$ is a subbundle of the tangent bundle $TH_1$. Equivalently, the contact bundle can be def\/ined as
\begin{gather*}
\xi=\ker\Theta,
\end{gather*}
where
\begin{gather*}
\Theta={\rm d}z+x{\rm d}y-y{\rm d}x
\end{gather*}
is the standard contact form. $T$ is called the Reeb vector f\/ield and $\Theta(T)=1$.
A CR structure on $H_1$ is an endomorphism $J\colon \xi\rightarrow \xi$ def\/ined by
\begin{gather*}
J(\mathring{e}_{1})=\mathring{e}_{2}\qquad \text{and}\qquad J(\mathring{e}_{2})=-\mathring{e}_{1}.
\end{gather*}
For any vectors $X,Y\in \xi$, we can associate a natural metric
\begin{gather*}
h(X, Y):={\rm d}\Theta(X, JY)
\end{gather*}called Levi-metric~\cite[Section~2]{CHMY2}. The metric $g_\Theta:=h \oplus \Theta^2$ is the adapted metric def\/ined on the tangent bundle $TH_1$.

The Heisenberg group $H_1$ can be regarded as a pseudo-hermitian manifold by con\-sidering $H_1$ associated with the standard pseudo-hermitian structure $(J,\Theta)$. Recall that a pseudo-hermitian transformation on $H_1$ is a dif\/feomorphism on $H_1$ preserving the pseudo-hermitian structure~$(J,\Theta)$. For more information about pseudo-hermitian structure, we refer the readers to \cite{CHMY1, Le1, Le2, We}. Denote by ${\rm PSH}(1)$ the group of pseudo-hermitian transformations on $H_1$, and call any element in ${\rm PSH}(1)$ a \textit{symmetry}. A symmetry in $H_1$ plays the same role as a rigid motion in $\mathbb{R}^n$ and will be characterized in Section~\ref{pseutran}.

Let $\gamma\colon I\rightarrow H_1$ be a parametrized curve. For any $t\in I$, the velocity $\gamma^{\prime}(t)$ has the natural decomposition
\begin{gather*}
\gamma^{\prime}(t)=\gamma^{\prime}_{\xi}(t)+\gamma^{\prime}_{T}(t),
\end{gather*}
where $\gamma^{\prime}_{\xi}(t)$ and $\gamma^{\prime}_{T}(t)$ are, respectively, the orthogonal projection of~$\gamma^{\prime}(t)$ on $\xi$ along $T$ and the orthogonal projection of $\gamma^{\prime}(t)$ on $T$ along $\xi$ with respect to the adapted metric~$g_\Theta$.

\begin{Definition}\label{definition2}
A \textit{horizontally regular curve} is a parametrized curve $\gamma(t)$
such that $\gamma^{\prime}_{\xi}(t)\neq 0$ for all $t\in I $. We say that $\gamma (t)$ is a \textit{horizontal curve} if $\gamma ^{\prime}(t)=\gamma
_{\xi }^{\prime}(t)$ for all $t\in I $.
\end{Definition}

In the context of contact geometry, some authors call the horizontally regular curves \textit{Le\-gendrian curves}, for example, in \cite{FT,Gi, G, MM}. Proposition~\ref{norpara} shows that any horizontally regular curve can be uniquely reparametrized by \textit{horizontal arc-length}~$s$, up to a~constant, such that $|\gamma _{\xi}^{\prime}(s)|=1$ for all $s$, and called the curve being \textit{with horizontal unit-speed}. Throughout the article, we always take for granted that the length $|\cdot |$ and the inner product $\langle\cdot,\cdot\rangle$ are def\/ined on the contact bundle in the sense of Levi-metric.

For a horizontally regular curve $\gamma(s)$ parametrized by horizontal arc-length $s$, we def\/ine the \textit{$p$-curvature} $k(s)$ and the \textit{contact normality} $\tau(s)$ by
\begin{gather*}
k(s):=\left\langle\frac{{\rm d}X(s)}{{\rm d}s},Y(s)\right\rangle, \qquad \tau(s):=\langle\gamma^{\prime}(s),T\rangle,
\end{gather*}
where $X(s)=\gamma^{\prime}_{\xi}(s)$, $Y(s)=JX(s)$, and $\frac{{\rm d}X(s)}{{\rm d}s}$ denotes the derivative of $X(s)$ w.r.t.\ the arc-length~$s$. Note that $k(s)$ is analogous to the curvature of the curve in $\mathbb{R}^n$, while $\tau(s)$ measures how far the curve is from being horizontal. We also point out that~$k(s)$ and~$\tau(s)$ are invariant under pseudo-hermitian transformations of horizontally regular curves. Recently we generalize those invariants to the higher dimension $H_n$ for any $n\geq 1$ and study the problem of classif\/ication of horizontal curves~\cite{CFH}.

The f\/irst theorem shows that horizontally regular curves are completely characterized by the functions $k(s)$ and $\tau(s)$.
\begin{Theorem}[the fundamental theorem for curves in $H_1$]\label{main1}
Given $C^1$-functions $k(s)$, $\tau(s)$, there exists a horizontally regular curve $\gamma(s)$ with horizontal unit-speed having $k(s)$ and $\tau(s)$ as its $p$-curvature and contact normality, respectively. In addition, any regular curve $\widetilde\gamma(s)$ with horizontal unit-speed satisfying the same $p$-curvature $k(s)$ and contact normality $\tau(s)$ differs from $\gamma(s)$ by a pseudo-hermitian transformation $g\in {\rm PSH}(1)$, namely,
\begin{gather*}
\widetilde\gamma{(s)}=g\circ \gamma(s)
\end{gather*}
for all $s$.
\end{Theorem}

Since a curve $\gamma (s)$ is horizontal if and only if the contact normality $\tau(s)=0$, we immediately have the corollary.

\begin{Corollary}
Given a $C^1$-function $k(s)$, there exists a horizontal curve $\gamma(s)$ with horizontal unit-speed having $k(s)$ as its $p$-curvature. In addition, any horizontal curve $\widetilde\gamma(s)$ with horizontal unit-speed satisfying the same $p$-curvature differs from $\gamma(s)$ by a pseudo-hermitian transformation $g\in {\rm PSH}(1)$, namely,
\begin{gather*}
\widetilde\gamma{(s)}=g\circ \gamma(s)
\end{gather*}
for all $s$.
\end{Corollary}

If the horizontally regular curve $\gamma$ is not parametrized by horizontal arc-length, in Section~\ref{computofpt} we also obtain the explicit formulae for the $p$-curvature and the contact normality.
\begin{Theorem}\label{main2}
Let $\gamma(t)= (x(t),y(t),z(t) )\in H_1$ be a~horizontally regular curve, not necessarily with horizontal unit-speed. The $p$-curvature $k(t)$ and the contact normality $\tau(t)$ of $\gamma(s)$ are
\begin{gather} \label{curformula}
k(t)=\frac{x^{\prime}y^{\prime\prime}-x^{\prime \prime}y^{\prime}}{\big((x^{\prime})^{2}+(y^{\prime})^{2}\big)^{\frac{
3}{2}}}(t), \qquad
\tau(t)=\frac{xy^{\prime}-x^{\prime}y+z^{\prime}}{\big((x^{\prime})^{2}+(y^{\prime})^{2}\big)^{\frac{1}{2}}}(t).
\end{gather}
\end{Theorem}

Notice that in \eqref{curformula} the $p$-curvature $k(t)$ depends only on $x(t)$, $y(t)$. We observe that $k(t)$ is the signed curvature of the plane curve $\alpha(t):=\pi \circ \gamma(t)=\big(x(t),y(t)\big)$, where $\pi$ is the projection onto the $xy$-plane along the $z$-axis. It is the fact that the signed curvature of a given plane curve completely describes the curve's behavior, we have the corollary:

\begin{Corollary}Suppose two horizontally regular curves in $H_1$ differ by a Heisenberg rigid motion, then their projections onto the $xy$-plane along the $z$-axis differ by a Euclidean rigid motion. In particular, two horizontal curves in~$H_1$ differ by a Heisenberg rigid motion if and only if their projections are congruent in the Euclidean plane.
\end{Corollary}

As an example, we calculate the $p$-curvature and contact normality for the geodesics, and obtain the
characteristic description of the geodesics.

\begin{Theorem}\label{chaofgeo} In $H_1$, the geodesics are the horizontally regular curves with constant $p$-curvature and zero contact normality.
\end{Theorem}

The second part of the paper shows the fundamental theorem of surfaces in $H_1$. Although the theorem has been generalized to hypersurfaces embedded in $H_n$ for any $n\geq 1$ (see~\cite{CL}), for the sake of being self-contained and future studies, we give a simpler proof for the case $n=1$ (Theorem~\ref{main8}). It is also worth to mention that Def\/inition~\ref{definition1} and the proofs of Theorems~\ref{main4} and~\ref{main8} are more primitive but intuitive than the one in \cite[Theorem~1.7]{CL}.

Let $\Sigma\subset H_1$ be an embedded regular surface. Recall that a singular point $p\in \Sigma $ is a point such that the tangent plane $T_{p}\Sigma $ coincides with the contact plane $\xi_{p}$ at $p$. Therefore outside the singular set (the non-singular part of $\Sigma $), the line bundle $T\Sigma \cap \xi$ forms one-dimensional foliation, which is called \textit{characteristic foliation}.

\begin{Definition}\label{definition1}
Let $F\colon U\rightarrow H_1$ be a parametrized surface with coordinates $(u,v)$ on $U\subset \mathbb{R}^2$. We say that $F$ is a \textit{normal parametrization} if
\begin{enumerate}\itemsep=0pt
\item[1)] $F(U)$ is a surface without singular points,

\item[2)] $F_{u}:=\frac{\partial F}{\partial u}$ def\/ines the characteristic foliation on $F(U)$,

\item[3)] $|F_{u}|=1$ for each point $(u,v)\in U$, where the norm is with respect to the Levi-metric.
\end{enumerate}
We call $(u,v)$ \textit{normal coordinates} of the surface $F(U)$.
\end{Definition}

It is easy to see that normal coordinates always exist locally near a non-singular point $p\in \Sigma$. In addition, for a normal parametrization $F$, denote $X=F_{u}$, $Y=JX$ and $T=\frac{\partial}{\partial z}$, we def\/ine the smooth functions $a$, $b$, $c$, $l$ and~$m$ on~$U$ by
\begin{gather}\label{coeofform}
a:=\langle F_{v},X\rangle ,\! \qquad b:=\langle F_{v},Y\rangle , \!\qquad c=:\langle F_{v},T\rangle , \!\qquad
l:=\langle F_{uu},Y\rangle,\! \qquad m:=\langle F_{uv},Y\rangle , \!\!\!
\end{gather}and call $a$, $b$ and $c$ \textit{the coefficients of the
first kind} of $F$, and $l$, $m$ \textit{the coefficients of the second kind}. In Section~\ref{invpasur} we calculate the Darboux derivatives (see Section~\ref{section2} and \eqref{Darder9}) of ${\rm PSH}(1)$ and it is known that by the method of moving frame, the inf\/initesimal displacement ${\rm d}X$ on the surface can be represented in terms of ${\rm d}u$ and ${\rm d}v$,
\begin{gather*}
{\rm d}X=F_{uu}{\rm d}u+F_{uv}{\rm d}v,
\end{gather*}
and so the functions $\ell$ and $m$ are the coef\/f\/icients of Darboux derivatives in terms of~${\rm d}u$ and~${\rm d}v$ respectively \eqref{Darder2}. By comparing~\eqref{Darder10} and~\eqref{Darder11}, all coef\/f\/icients satisfy the integrability conditions
\begin{gather} \label{intcons1}
a_{u}=bl,\qquad b_{u}=-al+m,\qquad c_{u}=2b, \qquad l_{v}-m_{u}=0,
\end{gather}where the subscripts denote the partial derivatives.

The following theorem states that these coef\/f\/icients are the complete dif\/ferential invariants for the map $F$.

\begin{Theorem}\label{main4} Let $U\subset \mathbb{R}^{2}$ be a simply connected open set. Suppose that $a$, $b$, $c$, $l$ and $m$ are functions defined on $U$ satisfying the integrability conditions~\eqref{intcons1}. Then there exists a normal parametrization $F\colon U\rightarrow H_1$ having $a$, $b$, $c$ and $l$, $m$ as the coefficients of first kind and second kind of~$F$, respectively. In addition, any normal parametrization $\widetilde{F}\colon U\rightarrow H_1$ with the same coefficients of first kind and second kind differ from~$F$ by a Heisenberg rigid motion, namely, $\widetilde{F}(u,v)=g\circ F(u,v)$ for all $(u,v)\in U$ for some $g\in {\rm PSH}(1)$.
\end{Theorem}

We should point out that the regularity of $F$ is, at least, $C^2$. In \eqref{tlofc2}, we will show that the function $l$, up to a sign, is independent of the choice of normal coordinates, and hence it is a~dif\/ferential invariant of the surface $F(U)$. Actually $l$ is the \textit{$p$-mean curvature} for $H_n$, $n\geq 2$ (see~\cite{CHMY2}). In particular, $F(U)$ is a $p$-minimal surface when $l=0$; such a parametrization $F\colon U\rightarrow H_1$ is called a~\textit{normal parametrization of $p$-minimal surface}. In this case, the integrability~condition \eqref{intcons1} becomes
\begin{gather}
a_{u}=0,\qquad b_{uu}=0,\qquad c_{u}=2b, \label{intcons3}\\
b_u=m, \label{secbyfir1}
\end{gather}
and the coef\/f\/icients of f\/irst kind completely dominate those of second kind. We conclude all above as the following result.

\begin{Theorem}\label{main5} Let $U\subset \mathbb{R}^{2}$ be a simply connected open set. Suppose that $a$, $b$ and $c$ are smooth functions defined on $U$ satisfying the integrability conditions \eqref{intcons3}. Then there exists a normal parametrization of $p$-minimal surface $F\colon U\rightarrow H_1$ having $a$, $b$ and $c$ as the coefficients of first kind of $F$, which also determines the coefficient $m$ of the second kind as in~\eqref{secbyfir1}. In addition, any normal parametrization of $p$-minimal surface $\widetilde{F}\colon U\rightarrow H_1$ with the same conditions differs from $F$ by a Heisenberg rigid motion, namely, $\widetilde{F}(u,v)=g\circ F(u,v)$ in $U$ for some $g\in {\rm PSH}(1)$.
\end{Theorem}

In Section~\ref{invpasur}, other invariants on the surface $\Sigma$ will also be obtained, including
\begin{gather*}\alpha :=\frac{b}{c}
\end{gather*}(up to a sign, called the \textit{$p$-variation}), and the restricted adapted metric $g_{\Theta}|_\Sigma$ on the surface $\Sigma$. Actually $\alpha$ is the function such that the vector f\/ield $\alpha e_{2}+T$ is tangent to the surface, where $e_{2}=Je_{1}$ and $e_{1}$ is a unit vector f\/ield tangent to the characteristic foliation. Let
\begin{gather*}
e_{\Sigma }:=\frac{\alpha e_{2}+T}{\sqrt{1+\alpha ^{2}}},
\end{gather*} be a unit vector f\/ield tangent to the surface.
Then we observe that these invariants $\alpha$, $l$, $e_\Sigma$ satisfy the integrability condition:
\begin{gather}
\big(1+\alpha ^{2}\big)^{\frac{3}{2}}(e_{\Sigma }l) =\big(1+\alpha
^{2}\big)(e_{1}e_{1}\alpha )-\alpha (e_{1}\alpha )^{2}+4\alpha \big(1+\alpha
^{2}\big)(e_{1}\alpha ) \nonumber\\
\hphantom{\big(1+\alpha ^{2}\big)^{\frac{3}{2}}(e_{\Sigma }l) =}{}+\alpha \big(1+\alpha ^{2}\big)^{2}K+\alpha l\big(1+\alpha ^{2}\big)^{\frac{1}{2}}(e_{\Sigma }\alpha )+\alpha \big(1+\alpha ^{2}\big)l^{2},\label{Intconsur}
\end{gather}
where $K$ is the Gaussian curvature with respect to $g_{\Theta}|_{\Sigma }$.

After studying the invariants in $H_1$, we show the second main theorem which says that the three invariants (the Riemannian metric~$g_\Theta$ induced by the adapted metric, the $p$-mean curvature~$l$, and the $p$-variation $\alpha$) comprise a complete set of invariants for a surface without singular points.

\begin{Theorem}[the fundamental theorem for surfaces in $H_1$]
\label{main8} Let $(\Sigma ,g)$ be a $2$-dimensional Riemannian manifold with Gaussian curvature $K$, and $\alpha^\prime$, $l^\prime$ two real-valued functions defined on~$\Sigma $. Assume that~$K$, $\alpha^{\prime}$ and $l^{\prime}$ satisfy the integrability condition~\eqref{Intconsur}. Then for every non-singular point $p\in \Sigma $, there exists an open neighborhood $U$ containing $p$ and an embedding $f\colon U\rightarrow H_1$ such that \begin{gather*}
g=f^{\ast }(g_{\Theta}),\qquad
\alpha ^{\prime}=f^{\ast }\alpha, \qquad
l^\prime=f^{\ast }l,
\end{gather*}
where $\alpha$, $l$ are the induced $p$-variation and $p$-mean curvature on $f(U)$ respectively. Moreover, $f$~is uniquely determined up to a~Heisenberg rigid motion.
\end{Theorem}

The third part of the paper is an application of the motion equations and the structure equations obtained from the proof of fundamental theorem for curves. We derive the Crofton formula in $H_1$ which is a classical result of integral geometry, relating the length of a f\/ixed curve, and the number of intersections for the curve and randomly oriented lines passing through it. Santal\'{o} generalized the result to compact Riemannian manifolds with boundary \cite{Ren, San}. In the simple case $\mathbb{R}^2$, given a f\/ixed piecewise regular curve $\gamma$, the Crofton formula states that
\begin{gather*}
\int_{\ell\cap\gamma\neq\varnothing}n(\ell\cap\gamma) {\rm d}L =4\cdot \operatorname{length}(\gamma),
\end{gather*}
where ${\rm d}L$ is the kinematic density def\/ined on the set of oriented lines in $\mathbb{R}^2$, and $n(\ell\cap\gamma)$ is the number of intersections of the line $\ell$ with $\gamma$. We have the analogues formula in~$H_1$. A~signif\/icant observation is that the geometric quantity on the right-hand side of the formula~\eqref{croftonidentity} is the $p$-area which naturally arises from the variation of volume for domains in CR manifolds~\cite{CHMY2}. By the similar technique, one of the authors also show the containment problem for the geometric probability in~$H_1$~\cite{H}. Recently Prandi, Rizzi, Seri~\cite{PRS} show a sub-Riemannian version of the classical Santal\'{o} formula which is applied to f\/inding the lower bound of sub-Laplacian in a~compact domain with boundary. The other approaches can be referred to~\cite{Pansu} (sub-Riemannian), and~\cite{Mon} (Carnot groups).

\begin{Theorem}[Crofton formula in $H_1$]\label{crofton}
Suppose $\mathbb{X}\colon (u,v)\in\Omega \mapsto \Sigma\subset H_1$ is a $C^2$-surface for some domain $\Omega \subset \mathbb{R}^2$. Let $\mathcal{L}$ be the set of oriented horizontal lines in $H_1$ and $n(\ell\cap\Sigma)$ be the number of intersections of the horizontal line $\ell\in\mathcal{L}$ with the surface $\Sigma$.
Then we have the Crofton formula
\begin{gather}\label{croftonidentity}
\int_{\ell\in\mathcal{L},\ \ell\cap\Sigma\neq\varnothing}n(\ell\cap\Sigma){\rm d}L=4\cdot\pa (\Sigma),
\end{gather}
where ${\rm d}L:= {\rm d}p\wedge {\rm d}\theta \wedge {\rm d}t$ is the kinematic density on $\mathcal{L}$.
\end{Theorem}

We give the outline of the paper. In Section~\ref{section2}, we state two
propositions about existence and uniqueness of mappings from a smooth manifold
into a Lie group $G$, which underlies our main theorems. In Section~\ref{section3}, we not only express the representation of $\ {\rm PSH}(1)$ but discuss how the matrix Lie group ${\rm PSH}(1)$ can be interpreted as the set of moving frames on the homogeneous space $H_1={\rm PSH}(1)/{\rm SO}(2)$; the moving frame formula in $H_1$ via the (left-invariant) Maurer--Cartan form will be derived. In Section~\ref{section4}, we compute the Darboux derivatives of the lift of a horizontally regular curve and give the proof of the f\/irst main theorem; moreover, the $p$-curvature and the contact normality for horizontally regular curves and geodesics are calculated.
In Section~\ref{invpasur}, we compute the Darboux derivatives of the lift of normal parametrized surfaces, and achieve the complete set of dif\/ferential invariants for a normal parametrized surface. In Section~\ref{section6}, by calculating the Darboux derivatives of the lift for $f\colon \Sigma\rightarrow H_1$, we show the fundamental theorem for surfaces $\Sigma$ in $H_1$. In Section~\ref{section7}, we show the Crofton formula which connects CR geometry and integral geometry.

\section{Calculus on Lie groups}\label{section2}
We recall two basic theorems from Lie groups, which play the essential roles in the proof of the main theorems. For the details we refer the readers to \cite{CC, C, G, IL,S}.

Let $M$ be a connected smooth manifold and $G\subset {\rm GL}(n,{\mathbb R})$ a matrix subgroup with Lie algebra $\mathfrak{g}$. Recall that a (left-invariant) Maurer--Cartan form $\omega$ is a Lie algebra-valued 1-form globally def\/ined on $G$ which is a linear mapping of the tangent space $T_gG$ at each $g\in G$ into~$T_eG$
\begin{gather*}
\omega(v):=\big(L_{g^{-1}}\big)_{*}v, \qquad \text{for all} \quad v\in T_g G,
\end{gather*} where $e\in G$ is the identity element. In particular when $G$ is a matrix Lie group, one has
\begin{gather*}
\omega = g^{-1}dg.
\end{gather*}

We f\/irst introduce the theorem of uniqueness.

\begin{Theorem}\label{ft1}
Given two maps $f, \widetilde{f}\colon M\rightarrow G$, then $ \widetilde{f}^{*}\omega=f^{*}\omega$ if and only if $\widetilde{f}=g\cdot f$
for some $g\in G$.
\end{Theorem}
We call the pullback 1-form $f^{*}\omega$ \textit{the Darboux derivative} of the map $f\colon M\rightarrow G$. When $M=\mathbb{R}$ and $G=(\mathbb{R}, +)$, the addition group, Theorem \ref{ft1} can be rephrased as the Fundamental Theorem of Calculus: if two dif\/ferentiable functions $f(x)$ and $\widetilde{f}(x)$ have the same derivatives $\frac{{\rm d}f}{{\rm d}x}=\frac{{\rm d}\widetilde{f}}{{\rm d}x}$, then $f(x)=\widetilde{f}(x)+c$ for some constant~$c$,

The second result is the theorem of existence.
\begin{Theorem}\label{ft2} Suppose that $\phi$ is a $\mathfrak{g}$-valued $1$-form on a~simply connected manifold $M$. Then there exists a map $f\colon M\rightarrow G$ satisfying $f^{*}\omega=\phi$ if and only if ${\rm d}\phi=-\phi \wedge \phi$. Moreover, the resulting map $f$ is unique up to a group action.
\end{Theorem}
In the case $M=\mathbb{R}$ and $G=(\mathbb{R}, +)$, this theorem implies the existence of derivatives for dif\/ferentiable functions. We mention that the proof of Theorem~\ref{ft2} relies on the Frobenius theorem.

\section[The group of pseudo-hermitian transformations on $H_1$]{The group of pseudo-hermitian transformations on $\boldsymbol{H_1}$}\label{section3}

\subsection[The pseudo-hermitian transformations on $H_1$]{The pseudo-hermitian transformations on $\boldsymbol{H_1}$}\label{pseutran}
A pseudo-hermitian transformation on $H_1$ is a dif\/feomorphism $\Phi$ on $H_1$ preserving the CR structure $J$ and the contact form $\Theta$; it satisf\/ies
\begin{gather*}
\Phi _{\ast }J=J\Phi _{\ast }\qquad \text{on}\quad \xi \qquad \text{and}\qquad
\ \Phi ^{\ast }\Theta =\Theta \qquad \text{in} \quad H_1.
\end{gather*}
A trivial example of a pseudo-hermitian transformation is a left translation $L_p$ in $H_1$; the other example is def\/ined by $\Phi _{R}\colon H_1\rightarrow H_1$
\begin{gather*}
\Phi _{R}\left(
\begin{matrix}
x \\
y \\
z
\end{matrix}
\right) \longrightarrow \left(
\begin{matrix}
R & 0 \\
0 & 1%
\end{matrix}
\right) \left(
\begin{matrix}
x \\
y \\
z
\end{matrix}
\right) ,
\end{gather*}
where $R\in {\rm SO}(2)$ is a 2$\times$2 special orthogonal matrix.

Let ${\rm PSH}(1)$ be the group of pseudo-hermitian transformations on $H_1$. We shall show that the group ${\rm PSH}(1)$ exactly consists of all
the transformations of the forms $\Phi _{p,R}:= L_{p}\circ \Phi _{R}$, a~transformation $\Phi _{R}$ followed by a left translation $L_{p}$. More precisely, we have
\begin{gather*}
\Phi _{p,R}\left(
\begin{matrix}
x \\
y \\
z%
\end{matrix}
\right) =\left(
\begin{matrix}
ax-by+p_{1} \\
bx+ay+p_{2} \\
(ap_{2}-bp_{1})x+(-bp_{2}-ap_{1})y+z+p_{3}
\end{matrix}
\right) ,
\end{gather*}
where $p=(p_{1},p_{2},p_{3})^{\rm t}\in H_{1}$ and $R=\left(
\begin{smallmatrix}
a & -b \\
b & a
\end{smallmatrix}
\right) \in {\rm SO}(2)$.

\begin{Theorem}\label{psgrp} Let $\Phi\colon H_{1}\rightarrow H_{1}$ be a pseudo-hermitian transformation. Then $\Phi=L_{p}\circ \Phi_{R}$ for some $R\in {\rm SO}(2)$ and $p\in H_{1}$.
\end{Theorem}

\begin{proof}
It suf\/f\/ices to consider the pseudo-hermitian transformation $\Phi\colon H_{1}\rightarrow H_{1}$ such that $\Phi(0)=0$. Indeed, if
$\Phi(0)=p$ for some $p\in H_1\setminus\{0\}$, then the composition $L_{p^{-1}}\circ \Phi$ is a~transformation f\/ixing the origin. Therefore, we reduce the proof of Theorem~\ref{psgrp} to the following lemma:

\begin{lem}\label{balemma} Let $\Phi $ be a pseudo-hermitian transformation on $H_{1}$
such that $\Phi (0)=0$. Then, for any $p\in H_{1}$, the matrix
representation of $\Phi _{\ast }(p)$ with respect to the standard basis $\big(\frac{\partial }{\partial x},\frac{\partial }{\partial y},\frac{\partial }{\partial z}\big)$ of~$\mathbb{R}^3$ is
\begin{gather*}
\Phi _{\ast }(p)=
\begin{pmatrix}
\cos \alpha _{0} & -\sin \alpha _{0} & 0 \\
\sin \alpha _{0} & \cos \alpha _{0} & 0 \\
0 & 0 & 1
\end{pmatrix}
_{\big( \frac{\partial }{\partial x},\frac{\partial }{\partial y},\frac{%
\partial }{\partial z}\big) }, 
\end{gather*}
for some real constant $\alpha _{0}$ which is independent of $p$, and hence $\Phi _{\ast }$ is a constant matrix.
\end{lem}

To prove Lemma~\ref{balemma}, f\/irst we calculate the matrix representation of $\Phi _{\ast }(p)$ with respect to the basis $(\mathring{e}_{1},\mathring{e}_{2},T)$. Since
\begin{gather*}
\Theta\left( \Phi _{\ast }\mathring{e}_{i}\right) =\left( \Phi ^{\ast
}\Theta\right) \left( \mathring{e}_{i}\right) =\Theta \left(
\mathring{e}_{i}\right) =0, \qquad \text{for} \quad i=1,2,
\end{gather*}
the contact bundle $\xi $ is invariant under $\Phi _{\ast }$. In addition, let $%
h$ be the Levi-metric on $\xi $ def\/ined by $h(X,Y)={\rm d}\Theta(X,JY) $, then
\begin{gather*}
\Phi ^{\ast }h(X,Y) =h(\Phi _{\ast }X,\Phi _{\ast }Y)={\rm d}\Theta(\Phi_{\ast }X,J\Phi _{\ast }Y)={\rm d}\Theta (\Phi _{\ast }X,\Phi _{\ast }JY)\\
\hphantom{\Phi ^{\ast }h(X,Y)}{} =\Phi ^{\ast }({\rm d}\Theta)(X,JY)={\rm d}(\Phi ^{\ast }\Theta )(X,JY) ={\rm d}\Theta(X,JY)=h(X,Y),
\end{gather*}
and hence $h ( \Phi _{\ast }X,\Phi _{\ast }Y) =h( X,Y) $ for every $X,Y\in \xi$. Thus, $\Phi _{\ast }$ is
orthogonal on~$\xi$. On the other hand, since
\begin{gather*}
\Theta (\Phi _{\ast }T)=\Theta\left( \Phi _{\ast }\frac{\partial }{\partial z}\right) = \Phi ^{\ast }\Theta \left( \frac{\partial }{\partial z}\right) =\Theta\left( \frac{\partial }{\partial z}\right) =1,
\end{gather*}
and
\begin{gather*}
{\rm d}\Theta(X,\Phi _{\ast }T) ={\rm d}\Theta\big (\Phi _{\ast }\Phi _{\ast}^{-1}X,\Phi _{\ast }T\big)=(\Phi ^{\ast }{\rm d}\Theta)\big(\Phi _{\ast }^{-1}X,T\big)=({\rm d}\Phi ^{\ast }\Theta)\big(\Phi _{\ast }^{-1}X,T\big)\\
\hphantom{{\rm d}\Theta(X,\Phi _{\ast }T)}{} ={\rm d}\Theta\big(\Phi_{\ast }^{-1}X,T\big)=0
\end{gather*} for all $X\in \xi$,
we have $\Phi _{\ast}T=T$. From the above argument, we conclude that the matrix representation
\begin{gather*}
\Phi _{\ast }(p)=
\begin{pmatrix}
\cos \alpha (p) & -\sin \alpha (p) & 0 \\
\sin \alpha (p) & \cos \alpha (p) & 0 \\
0 & 0 & 1%
\end{pmatrix}%
_{\left( \mathring{e}_{1},\mathring{e}_{2},\frac{\partial }{\partial z}
\right) },
\end{gather*}%
for some real-valued function $\alpha $ on $H_{1}$.

Next, we rewrite the matrix representation of $\Phi _{\ast }(p)$ from the basis $\big( \mathring{e}%
_{1},\mathring{e}_{2},\frac{\partial }{\partial z}\big) $ to the basis $\big( \frac{%
\partial }{\partial x},\frac{\partial }{\partial y},\frac{\partial }{%
\partial z}\big) $. Let $\Phi =(\Phi ^{1},\Phi ^{2},\Phi ^{3})$, $p= ( p_{1},p_{2},p_{3} ) $, $\mathring{e}%
_{1}(p) =\frac{\partial }{\partial x}+p_{2}\frac{\partial }{%
\partial z}$ and $\mathring{e}_{2}(p) =\frac{\partial }{\partial
y}-p_{1}\frac{\partial }{\partial z}$, then
\begin{gather*}
\Phi _{\ast }(p)\left( \frac{\partial }{\partial x}\right) =\Phi _{\ast
}(p)\left[ \mathring{e}_{1}(p) -p_{2}\frac{\partial }{\partial z}
\right] =\Phi _{\ast }(p)\left[ \mathring{e}_{1}(p) \right]
-p_{2}\frac{\partial }{\partial z} \\
\hphantom{\Phi _{\ast }(p)\left( \frac{\partial }{\partial x}\right)}{} =\cos \alpha (p) \mathring{e}_{1}\left[ \Phi (p)\right] +\sin
\alpha (p) \mathring{e}_{2}\left[ \Phi (p)\right] -p_{2}\frac{%
\partial }{\partial z} \\
\hphantom{\Phi _{\ast }(p)\left( \frac{\partial }{\partial x}\right)}{} =\cos \alpha (p) \frac{\partial }{\partial x}+\sin \alpha
(p) \frac{\partial }{\partial y} +\big[ \cos \alpha (p) \Phi ^{2}(p) -\sin
\alpha (p) \Phi ^{1}(p) -p_{2}\big] \frac{\partial
}{\partial z},
\end{gather*}
and
\begin{gather*}
\Phi _{\ast }(p)\left( \frac{\partial }{\partial y}\right) =\Phi _{\ast
}(p)\left[ \mathring{e}_{2}(p) +p_{1}\frac{\partial }{\partial z}%
\right] =\Phi _{\ast }(p)\left[ \mathring{e}_{2}(p) \right]
+p_{1}\frac{\partial }{\partial z} \\
\hphantom{\Phi _{\ast }(p)\left( \frac{\partial }{\partial y}\right)}{} =-\sin \alpha (p) \mathring{e}_{1}\left[ \Phi (p)\right] +\cos
\alpha (p) \mathring{e}_{2}\left[ \Phi (p)\right] +p_{1}\frac{%
\partial }{\partial z} \\
\hphantom{\Phi _{\ast }(p)\left( \frac{\partial }{\partial y}\right)}{} =-\sin \alpha (p) \frac{\partial }{\partial x}+\cos \alpha
(p) \frac{\partial }{\partial y} +\big[{-}\sin \alpha (p) \Phi ^{2}(p) -\cos
\alpha (p) \Phi ^{1}(p) +p_{1}\big] \frac{\partial
}{\partial z}.
\end{gather*}
Thus,
\begin{gather}\label{31}
\Phi _{\ast }(p)=
\begin{pmatrix}
\cos \alpha (p) & -\sin \alpha (p) & 0 \\
\sin \alpha (p) & \cos \alpha (p) & 0 \\
\Phi _{x}^{3}(p) & \Phi _{y}^{3}(p) & 1%
\end{pmatrix}%
_{\big( \frac{\partial }{\partial x},\frac{\partial }{\partial y},\frac{%
\partial }{\partial z}\big)}
:=
\begin{pmatrix}
\Phi^1_x & \Phi^1_y & \Phi^1_z \\
\Phi^2_x & \Phi^2_y & \Phi^2_z \\
\Phi^3_x & \Phi^3_y & \Phi^3_z \\
\end{pmatrix},
\end{gather}
where
\begin{gather*}
\Phi _{x}^{3}(p) :=\frac{\partial\Phi_x^3}{\partial x}=\cos \alpha (p) \Phi ^{2}(p)-\sin \alpha (p) \Phi ^{1}(p) -p_{2}, \\
\Phi _{y}^{3}(p):=\frac{\partial\Phi_y^3}{\partial y} =-\sin \alpha (p) \Phi ^{2}(p)-\cos \alpha (p) \Phi ^{1}(p) +p_{1},
\end{gather*}
and denote the subscripts as the partial derivatives for all $\Phi^i$'s. By \eqref{31} that $\Phi _{z}^{1}=\Phi _{z}^{2}=0$, it follows that the functions $\Phi ^{1}$ and $\Phi ^{2}$ both depend only on $x$ and $y$, and so is $\alpha $. Moreover, use~\eqref{31} again and the facts $\Phi _{xy}^{1}=\Phi _{yx}^{1}$ and $\Phi _{xy}^{2}=\Phi _{yx}^{2}$, we have
\begin{gather*}
\begin{pmatrix}
\cos \alpha & -\sin \alpha \\
\sin \alpha & \cos \alpha
\end{pmatrix}
\begin{pmatrix}
\alpha _{x} \\
\alpha _{y}%
\end{pmatrix}%
=
\begin{pmatrix}
0 \\
0
\end{pmatrix},
\end{gather*}
which implies that $\alpha _{x}=\alpha _{y}=0$. Thus $\alpha $ is a constant on $H_{1}$, say $\alpha =\alpha _{0}$. From~\eqref{31} and notice that $\Phi (0)=0$, we f\/inally get
\begin{gather*}
\Phi ^{1} =x\cos {\alpha _{0}}-y\sin {\alpha _{0}}, \qquad
\Phi ^{2} =x\sin {\alpha _{0}}+y\cos {\alpha _{0}},
\end{gather*}
which implies that $\Phi _{x}^{3}=\Phi _{y}^{3}=0$. Therefore
\begin{gather*}
\Phi _{\ast }(p)=
\begin{pmatrix}
\cos \alpha _{0} & -\sin \alpha _{0} & 0 \\
\sin \alpha _{0} & \cos \alpha _{0} & 0 \\
0 & 0 & 1%
\end{pmatrix}
_{\big( \frac{\partial }{\partial x},\frac{\partial }{\partial y},\frac{%
\partial }{\partial z}\big) },
\end{gather*}
and the result follows.
\end{proof}

\subsection[Representation of ${\rm PSH}(1)$]{Representation of $\boldsymbol{{\rm PSH}(1)}$}

The pseudo-hermitian transformation $\Phi_{p,R}$ and the points $(x,y,z)^t$ in $H_{1}$ can be respectively represented as
\begin{gather*}
\Phi_{p,R}\leftrightarrow M=\left(\begin{matrix}
1 & 0 & 0 & 0 \\
p_{1} & a & -b & 0 \\
p_{2} & b & a & 0 \\
p_{3} & ap_{2}-cp_{1} & bp_{2}-dp_{1} & 1%
\end{matrix}
\right),
\end{gather*}
and
\begin{gather*}
\left(
\begin{matrix}
x \\
y \\
z%
\end{matrix}
\right)\leftrightarrow X=\left(
\begin{matrix}
1 \\
x \\
y \\
z%
\end{matrix}
\right)
\end{gather*}
satisfying
\begin{gather*}
MX=\left(
\begin{matrix}
1 \\
\Phi_{p,R}\left(%
\begin{matrix}
x \\
y \\
z%
\end{matrix}
\right)
\end{matrix}
\right),
\end{gather*}where $a^2+b^2=1$.
Therefore, ${\rm PSH}(1)$ can be represented as a matrix group
\begin{gather*}
{\rm PSH}(1)=\left \{ M\in {\rm GL}(4,{\mathbb R})\ \Big{|}\ M=\left(
\begin{matrix}
1 & 0 & 0 & 0 \\
p_{1} & a & -b & 0 \\
p_{2} & b & a & 0 \\
p_{3} & ap_{2}-bp_{1} & -bp_{2}-ap_{1} & 1%
\end{matrix}%
\right), \, a^2+b^2=1 \right \}.
\end{gather*}

Let ${\mathfrak{psh}}(1)$ be the Lie algebra of ${\rm PSH}(1)$. It is easy to see that the element of ${\mathfrak{psh}}(1)$ is of the form
\begin{gather*}
\left(%
\begin{matrix}
0 & 0 & 0 & 0 \\
x_{1} & 0 & -x_{1}{}^{2} & 0 \\
x_{2} & x_{1}{}^{2} & 0 & 0 \\
x_{3} & x_{2} & -x_{1} & 0%
\end{matrix}
\right).
\end{gather*}
and the corresponding Maurer--Cartan form of ${\rm PSH}(1)$ is of the form
\begin{gather*}
\omega =\left(
\begin{matrix}
0 & 0 & 0 & 0 \\
\omega^{1} & 0 & -\omega_{1}{}^{2} & 0 \\
\omega^{2} & \omega_{1}{}^{2} & 0 & 0 \\
\omega^{3} & \omega^{2} & -\omega^{1} & 0%
\end{matrix}
\right),
\end{gather*}
where ${\omega_{1}}^{2}$ and $\omega ^{j}$, $j=1,2,3$, are 1-forms on ${\rm PSH}(1)$.

\subsection[The oriented frames on $H_1$]{The oriented frames on $\boldsymbol{H_1}$}

The oriented frame $(p;X,Y,T)$ on $H_1$ consists of the point $p\in H_1$ and the orthonormal vector f\/ields
 $X\in \xi_p$, $Y=JX$ with respect to the Levi-metric. We can identify ${\rm PSH}(1)$
with the set of all oriented frames on $H_{1}$ as follows:
\begin{gather*}
{\rm PSH}(1)\ni M=\left(
\begin{matrix}
1 & 0 & 0 & 0 \\
p_{1} & a & -b & 0 \\
p_{2} & b & a & 0 \\
p_{3} & ap_{2}-bp_{1} & -bp_{2}-ap_{1} & 1
\end{matrix}
\right) \leftrightarrow (p;X,Y,T),
\end{gather*}
where
\begin{gather*}
p =(p_{1},p_{2},p_{3})^{\rm t}, \\
X =a\frac{\partial }{\partial x}+b\frac{\partial }{\partial y}+(ap_{2}-bp_{1})\frac{\partial }{\partial t}, \qquad
Y =-b\frac{\partial }{\partial x}+a\frac{\partial }{\partial y}+(-bp_{2}-ap_{1})\frac{\partial }{\partial t}.
\end{gather*}
Actually, we have $X=a\mathring{e}_{1}(p)+b\mathring{e}_{2}(p)$ and $Y=-b\mathring{e}_{1}(p)+a\mathring{e}_{2}(p)$, and hence $M$ is the unique $4\times
4 $ matrix such that
\begin{gather*}
(p;X,Y,T)=(0;\mathring{e}_{1},\mathring{e}_{2},T)M.
\end{gather*}

\subsection{Moving frame formula}

Since ${\rm PSH}(1)$ is a matrix Lie group, the Maurer--Cartan form must be $\omega =M^{-1}{\rm d}M$ or ${\rm d}M=M\omega $ (see \cite{CCL}). Immediately one has that
\begin{gather*}
({\rm d}p;{\rm d}X,{\rm d}Y,{\rm d}T)=(p;X,Y,T)\left(
\begin{matrix}
0 & 0 & 0 & 0 \\
\omega ^{1} & 0 & -\omega _{1}{}^{2} & 0 \\
\omega ^{2} & \omega _{1}{}^{2} & 0 & 0 \\
\omega ^{3} & \omega ^{2} & -\omega ^{1} & 0%
\end{matrix}%
\right).
\end{gather*}
Thus, we have reached the moving frame formula:
\begin{gather}
{\rm d}p=\omega ^{1}X+\omega ^{2}Y+\omega ^{3}T, \qquad
{\rm d}X=\ \omega_{1}{}^{2}Y+\omega ^{2} T,\nonumber\\
{\rm d}Y= -\omega _{1}{}^{2}X-\omega ^{1}T,\qquad
{\rm d}T=0.\label{movingframe}
\end{gather}

\section[Dif\/ferential invariants of horizontally regular curves in $H_1$]{Dif\/ferential invariants of horizontally regular curves in $\boldsymbol{H_1}$}\label{section4}

\begin{prop}\label{norpara}
Any horizontally regular curve $\gamma(t)$ can be reparametrized by its horizontal arc-length $s$ such that $|\gamma'_\xi(s)|=1$.
\end{prop}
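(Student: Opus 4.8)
The plan is to mimic the classical reparametrization-by-arc-length argument, with $|\gamma_\xi'|$ playing the role of speed. Given a horizontally regular curve $\gamma:I\to H_1$, define the function $\sigma:I\to\mathbb R$ by
\begin{equation*}
\sigma(t)=\int_{t_0}^{t}|\gamma_\xi'(u)|\,du
\end{equation*}
for a fixed basepoint $t_0\in I$. First I would observe that $t\mapsto|\gamma_\xi'(t)|$ is continuous (indeed as smooth as $\gamma$ allows), since $\gamma_\xi'(t)$ is the orthogonal projection of $\gamma'(t)$ onto the contact plane $\xi_{\gamma(t)}$ along $T$, and this projection depends smoothly on the point and on $\gamma'(t)$; concretely, writing $\gamma'(t)=\gamma_\xi'(t)+\tau(t)T$ one has $\gamma_\xi'(t)=\gamma'(t)-\langle\gamma'(t),T\rangle T$ relative to the splitting $TH_1=\xi\oplus\mathbb R T$, so $|\gamma_\xi'(t)|$ is a smooth positive function. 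Hence $\sigma$ is $C^1$ (or smoother) with $\sigma'(t)=|\gamma_\xi'(t)|>0$ by horizontal regularity, so $\sigma$ is a strictly increasing diffeomorphism from $I$ onto an interval $J=\sigma(I)$.

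Next I would let $s=\sigma(t)$, denote by $t=\sigma^{-1}(s)$ the inverse, and set $\widetilde\gamma(s)=\gamma(\sigma^{-1}(s))$. The key computation is the chain rule: $\widetilde\gamma'(s)=\gamma'(\sigma^{-1}(s))\cdot\frac{dt}{ds}=\gamma'(t)/\sigma'(t)$. Because the decomposition $v\mapsto(v_\xi,v_T)$ relative to $TH_1=\xi\oplus\mathbb R T$ is linear on each tangent space, scaling $\gamma'(t)$ by the positive scalar $1/\sigma'(t)$ scales its $\xi$-component by the same factor, so $\widetilde\gamma_\xi'(s)=\gamma_\xi'(t)/\sigma'(t)=\gamma_\xi'(t)/|\gamma_\xi'(t)|$, which has Levi-norm $1$ for every $s$. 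This is exactly the horizontally unit-speed condition. I would also remark that $\widetilde\gamma$ is still horizontally regular (its $\xi$-component never vanishes, being a unit vector), so the reparametrization stays inside the class.

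Finally I would address uniqueness up to a constant (mentioned in the surrounding text, though not strictly part of the displayed statement): if $s$ and $\bar s$ are two parameters both making the $\xi$-component a unit vector, then by the computation above $\frac{d\bar s}{ds}$ has absolute value $1$ everywhere and is continuous, hence is the constant $+1$ or $-1$, giving $\bar s=\pm s+\text{const}$; if one additionally fixes the orientation of the curve, $\bar s=s+\text{const}$. I do not expect any serious obstacle here: the only point requiring a little care is making sure the projection onto $\xi$ along $T$ is genuinely a \emph{linear} (hence scaling-equivariant) operation on each tangent space and varies smoothly with the base point, so that the one-variable change-of-variables argument goes through verbatim; everything else is the standard arc-length reparametrization.
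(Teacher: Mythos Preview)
Your argument is correct and follows essentially the same route as the paper: define $s(t)=\int_{t_0}^{t}|\gamma_\xi'(u)|\,du$, invert using $s'(t)=|\gamma_\xi'(t)|>0$, and apply the chain rule together with the linearity of the $\xi$-projection to conclude $|\widetilde\gamma_\xi'(s)|=1$. Your version is simply more detailed about smoothness, the linearity of the splitting, and the uniqueness of $s$ up to sign and constant, points the paper leaves implicit.
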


\begin{proof}
Def\/ine $s(t)=\int_{0}^{t}|\gamma _{\xi}^{\prime}(u)|{\rm d}u$. Then any horizontal arc-length dif\/fers $s$ up to a constant. By the fundamental theorem
of calculus, we have $\frac{{\rm d}s}{{\rm d}t}=|\gamma _{\xi}^{\prime}(t)|$.
Since
\begin{gather*}
\frac{{\rm d}\gamma }{{\rm d}s}=\frac{{\rm d}\gamma }{{\rm d}t}\frac{{\rm d}t}{{\rm d}s}=\frac{\gamma ^{\prime}(t)}{|\gamma _{\xi}^{\prime}(t)|},
\end{gather*}
$\gamma _{\xi}^{\prime}(s)=\frac{\gamma _{\xi}^{{\prime}}(t)}{|\gamma _{\xi}^{\prime}(t)|}$, namely, $|\gamma _{\xi
}^{\prime}(s)|=1$.
\end{proof}

\begin{Definition}
A \textit{lift} of a mapping $f\colon M\rightarrow G/H$ is def\/ined to be a map $F\colon M\rightarrow G$ such that the following diagram commutes:
\begin{gather*}
 \xymatrix{ & G \ar[d]\\ M\ar[ur]^F \ar[r]_f & G/H, }
\end{gather*}
where $G$ is a Lie group, $H$ is a closed Lie subgroup and $G/H$ is the associated homogeneous space. In additional, another lift $\tilde{F}$ of $f$ has to satisfy
\begin{gather*}
\widetilde{F}(x) = F(x)g(x)
\end{gather*}
for some map $g\colon M\rightarrow H$.
\end{Definition}
\begin{Remark}\label{settings}
In the next section, we shall set $G={\rm PSH}(1)$, $M=(a,b)\subset R$, $f=\gamma$, $F=\widetilde{\gamma}$, $G/H={\rm PSH}(1)/{\rm SO}(2)$, and identify ${\rm PSH}(1)/{\rm SO}(2)$ with $H_1$.
\end{Remark}

\subsection{The Proof of Theorem \protect \ref{main1}}

By Proposition \ref{norpara}, we may assume that the horizontally regular curve $\gamma (s)$ is parametrized by the horizontal arc-length $s$. Each point on $\gamma$ uniquely def\/ines an oriented frame
\begin{gather*}
(\gamma (s);X(s),Y(s),T),
\end{gather*}
where $X(s)=\gamma _{\xi}^{\prime}(s)$ is the horizontally tangent vector of $\gamma(s)$ and $Y(s)=JX(s)$. By Remark~\ref{settings}, there exists a lift $\tilde{\gamma}$ of $\gamma$ to ${\rm PSH}(1)$, which is unique up to a~${\rm SO}(2)$ group action. We abuse the notation and denote the lift by
\begin{gather*}
\widetilde{\gamma}(s)=(\gamma (s);X(s),Y(s),T).
\end{gather*}

Let $\omega $ be the Maurer--Cartan form of ${\rm PSH}(1)$. We shall derive the Darboux derivative $\widetilde{\gamma }^{\ast }\omega $ of the lift $\widetilde{\gamma }(s)$: by using the moving frame formula~(\ref{movingframe}), we have
\begin{gather}\label{diffcurve}
{\rm d}\widetilde{\gamma }(s)=\widetilde{\gamma }^{\ast }{\rm d}p =X(s)\widetilde{\gamma }^{\ast }\omega ^{1}+Y(s)\widetilde{\gamma }^{\ast
}\omega ^{2}+T\widetilde{\gamma }^{\ast }\omega ^{3},
\end{gather}
and observe that all pull-back 1-forms by $\widetilde{\gamma }$ are the multiples of ${\rm d}s$,
\begin{gather}\label{diffcurve1}
{\rm d}\widetilde{\gamma }(s) =\gamma _{\xi}^{\prime}(s){\rm d}s+\gamma_{T}^{\prime}(s){\rm d}s =X(s){\rm d}s+\gamma _{T}^{\prime}(s){\rm d}s.
\end{gather}
Comparing (\ref{diffcurve}) and (\ref{diffcurve1}) to get
\begin{gather*}
\widetilde{\gamma }^{\ast }\omega ^{1} ={\rm d}s,\qquad
\widetilde{\gamma }^{\ast}\omega ^{2}=0, \qquad
\widetilde{\gamma }^{\ast }\omega ^{3} =\langle \gamma ^{\prime}(s),T\rangle {\rm d}s=\tau
(s){\rm d}s.
\end{gather*}
Insert $\widetilde{\gamma }^{\ast }\omega ^{3}$ into (\ref{movingframe}),
\begin{gather*}
{\rm d}X(s)=Y(s)\widetilde{\gamma }^{\ast }\omega _{1}{}^{2}+T\widetilde{\gamma }%
^{\ast }\omega ^{2}=Y(s)\widetilde{\gamma }^{\ast }\omega _{1}{}^{2},
\end{gather*}%
one has
\begin{gather*}
\widetilde{\gamma }^{\ast }\omega _{1}{}^{2}=\left\langle \frac{{\rm d}X(s)}{{\rm d}s},Y(s)\right\rangle {\rm d}s=k(s){\rm d}s.
\end{gather*}
As a consequence, the Darboux derivative of $\widetilde{\gamma }$ is obtained
\begin{gather}\label{fodar}
\widetilde{\gamma }^{\ast }\omega =\left(
\begin{array}{cccc}
0 & 0 & 0 & 0 \\
1 & 0 & -k(s) & 0 \\
0 & k(s) & 0 & 0 \\
\tau (s) & 0 & -1 & 0%
\end{array}%
\right) {\rm d}s.
\end{gather}%

For any functions $k(s)$ and $\tau (s)$ def\/ined on an open
interval $I$. Suppose $\varphi$ is the ${\mathfrak{psh}}(1)$-valued 1-form def\/ined by (\ref{fodar}). It is easy to check that $\varphi$ satisf\/ies
${\rm d}\varphi +\varphi \wedge \varphi =0$. Therefore, Theo\-rem~\ref{ft2} implies that there exists a curve
\begin{gather*}
\widetilde{\gamma }(s)=(\gamma (s); X(s),Y(s),T)\in {\rm PSH}(1)
\end{gather*}
such that $\widetilde{\gamma }^{\ast }\omega =\varphi $. By the moving frame formula (\ref{movingframe}), we have
\begin{gather*}
{\rm d}\gamma (s) =X(s){\rm d}s+\tau (s)T{\rm d}s, \qquad {\rm d}X(s) =k(s)Y(s){\rm d}s,\qquad
{\rm d}Y(s) =-k(s)X(s){\rm d}s-T{\rm d}s,
\end{gather*}
which means that
\begin{gather*}
X(s) =\gamma _{\xi}^{\prime}(s),\qquad
k(s) =\left\langle \frac{{\rm d}X(s)}{{\rm d}s},Y(s)\right\rangle ,\qquad
\tau (s) =\left\langle \frac{{\rm d}\gamma (s)}{{\rm d}s},T\right\rangle .
\end{gather*}
This completes the proof of existence.

To prove uniqueness, suppose that two horizontally regular curves $\gamma _{1}$ and $\gamma _{2}$ have the same $p$-curvature $k(s)$ and contact normality $\tau (s)$. The identity (\ref{fodar}) shows that they must have the same Darboux derivatives
\begin{gather*}
\widetilde{\gamma }_{1}^{\ast }\omega =\widetilde{\gamma }_{2}^{\ast }\omega.
\end{gather*}%
Therefore, by Theorem \ref{ft1}, there exists a symmetry $g\in {\rm PSH}(1)$ such that $\widetilde{\gamma }_{2}(s)=g\circ \widetilde{\gamma }_{1}(s)$, and hence $\gamma_{2}(s)=g\circ \gamma _{1}(s)$ for all $s$. This completes the proof of uniqueness up to a group action.

\subsection[The derivation of the $p$-curvature and the contact normality]{The derivation of the $\boldsymbol{p}$-curvature and the contact normality}\label{computofpt}
In the subsection, we will compute the $p$-curvature and the contact normality for horizontally regular curves (Theorem \ref{main2}) and for the geodesics in $H_1$ (Theorem~\ref{chaofgeo}).

\begin{proof}[Proof of Theorem \ref{main2}]
Let $\gamma(t)=(x(t),y(t),z(t))$ be a horizontally regular curve. The ho\-rizontal
arc-length $s$ is def\/ined by
\begin{gather*}
s(t)=\int_{0}^{t}|\gamma_{\xi}^{\prime}(u)|{\rm d}u.
\end{gather*}
We f\/irst observe that there is the natural decomposition
\begin{gather}
\gamma^{\prime}(t)=(x^{\prime}(t),y^{\prime}(t),z^{{\prime}}(t))=x^{\prime}(t)\frac{\partial}{\partial x}+y^{\prime}(t)\frac{\partial}{\partial y}+z^{\prime}(t)\frac{\partial}{\partial z} \nonumber\\
\hphantom{\gamma^{\prime}(t)}{} =
\underbrace{x^{\prime}(t)\mathring{e}_{1}+y^{\prime}(t)\mathring{e}_{2}}_{\gamma'_\xi(t)}+\underbrace{(z^{{\prime}}(t)+xy^{\prime}(t)-yx^{\prime}(t))\frac{\partial}{\partial z}}_{\gamma'_T(t)},\label{velexp}
\end{gather}
where we abuse the notation by $\frac{\partial}{\partial z}=T$. Let $\bar{\gamma}(s)$ be the reparametrization of $\gamma(t)$ by the horizontal arc-length~$s$. Since $\gamma^{\prime}(t)=\bar{\gamma}^{\prime}(s)\frac{{\rm d}s}{{\rm d}t}$, by comparing with the decomposition~(\ref{velexp}), one has
\begin{gather}
\bar{\gamma}_{\xi}^{\prime}(s)=\frac{{\rm d}t}{{\rm d}s}(x^{{\prime}}(t)\mathring{e}_{1}+y^{\prime}(\mathring{e}_{2})), \qquad
\bar{\gamma}_{T}^{\prime}(s)=\frac{{\rm d}t}{{\rm d}s}\big((z^{{\prime
}}(t)+xy^{\prime}(t)-yx^{\prime}(t))T\big).\label{velexp1}
\end{gather}

For the $p$-curvature, by (\ref{velexp1}), note that $X(s)=\frac{{\rm d}t}{{\rm d}s}(x^{\prime}(t)\mathring{e}_{1}+y^{\prime}(t)\mathring{e}_{2})$, and $Y(s)=JX(s)=\frac{{\rm d}t}{{\rm d}s}(x^{\prime}(t)\mathring{e}_{2}-y^{\prime}(t)\mathring{e}_{1})$. A straight-forward computation shows
\begin{gather*}
\frac{{\rm d}X(s)}{{\rm d}s} =\frac{{\rm d}}{{\rm d}s}\left(\frac{{\rm d}t}{{\rm d}s}\left(x^{\prime}(t),y^{\prime}(t),x^{\prime}y(t)-xy^{\prime}(t)\right)\right)\\
\hphantom{\frac{{\rm d}X(s)}{{\rm d}s}}{} =\left(\frac{{\rm d}t}{{\rm d}s}\right)^{2}\left(x^{\prime\prime}(t),y^{{\prime\prime }}(t),x^{\prime\prime}y(t)-xy^{\prime\prime}(t)\right)+
\frac{{\rm d}^{2}t}{{\rm d}s^{2}}\left(x^{\prime}(t),y^{\prime}(t),x^{\prime}y(t)-xy^{\prime}(t)\right) \\
\hphantom{\frac{{\rm d}X(s)}{{\rm d}s}}{}=\left(x^{\prime\prime}(t)\left(\frac{{\rm d}t}{{\rm d}s}\right)^{2}+x^{\prime}(t)\frac{{\rm d}^{2}t}{{\rm d}s^{2}}\right)\mathring{e}_{1}+ \left(y^{\prime\prime}(t)\left(\frac{{\rm d}t}{{\rm d}s}\right)^{2}+y^{\prime}(t)\frac{{\rm d}^{2}t}{{\rm d}s^{2}}\right)\mathring{e}_{2},
\end{gather*}
so
\begin{gather}
k(s)=\left\langle \frac{{\rm d}X(s)}{{\rm d}s},Y(s)\right\rangle \nonumber\\
\hphantom{k(s)}{} =-\left(x^{\prime\prime}(t)\left(\frac{{\rm d}t}{{\rm d}s}\right)^{2}+x^{\prime}(t)\frac{{\rm d}^{2}t}{{\rm d}s^{2}}\right)y^{\prime}(t)\frac{{\rm d}t}{{\rm d}s}
+\left(y^{\prime\prime}(t)\left(\frac{{\rm d}t}{{\rm d}s}\right)^{2}+y^{\prime}(t)\frac{{\rm d}^{2}t}{{\rm d}s^{2}}\right)x^{\prime}(t)\frac{{\rm d}t}{{\rm d}s} \nonumber\\
\hphantom{k(s)}{}=-\left(x^{\prime\prime}(t)y^{\prime}(t)-x^{\prime}(t)y^{\prime\prime}(t)\right)\left(\frac{{\rm d}t}{{\rm d}s}\right)^3
=\frac{x^{\prime}y^{\prime\prime}-x^{\prime\prime}y^{\prime}}{\big((x^{\prime})^{2}+(y^{\prime})^{2}\big)^{\frac{3}{2}}}(t).\label{pcur}
\end{gather}
Again by (\ref{velexp1}), the contact normality must be
\begin{gather}
\tau(s)=\langle \bar{\gamma}^{\prime}(s),T\rangle =\langle \bar{\gamma}_{T}^{\prime}(s),T\rangle=\frac{{\rm d}t}{{\rm d}s}(z^{\prime}(t)+xy^{\prime}(t)-yx^{\prime}(t)) =\frac{xy^{\prime}-x^{\prime}y+z^{\prime}}{\big((x^{\prime})^{2}+(y^{\prime})^{2}\big)^{\frac{1}{2}}}(t),\label{Tvai}
\end{gather}
and the result follows.
\end{proof}

Next we use (\ref{pcur}) and (\ref{Tvai}) to compute the $p$-curvature and the contact normality for the geodesics in $H_1$.

\begin{proof}[Proof of Theorem \ref{chaofgeo}]
Recall \cite{CCG} that the Hamiltonian system on $H_{1}$ for the geodesics is
\begin{gather}\label{HS}
\dot{x}^{k}(t) = h^{kj} (x(t)) \xi_{j}(t), \qquad
\dot{\xi}_{k}(t) = -\frac{1}{2}\sum \limits_{i,j=1}^{3}\frac{\partial h^{ij}(x) }{\partial x^{k}}\xi _{i}\xi _{j},\qquad
k=1,2,3,
\end{gather}
where
\begin{gather*}
h^{ij}\big( x^{1},x^{2},x^{3}\big) =
\begin{pmatrix}
1 & 0 & x^{2} \\
0 & 1 & -x^{1} \\
x^{2} & -x^{1} & \big( x^{1}\big) ^{2}+\big( x^{2}\big) ^{2}%
\end{pmatrix}.
\end{gather*}
So the Hamiltonian system (\ref{HS}) can be expressed by
\begin{gather*}
\dot{x}^{1}(t) =\xi _{1}+x^{2}\xi _{3}, \\
\dot{x}^{2}(t) =\xi _{2}-x^{1}\xi _{3}, \\
\dot{x}^{3}(t) =x^{2}\xi _{1}-x^{1}\xi _{2}+\xi _{3}\big[\big( x^{1}\big) ^{2}+\big( x^{2}\big) ^{2}\big] ,\\
\dot{\xi}_{1}(t) =\xi _{2}\xi _{3}-x^{1}\xi _{3}^{2}, \\
\dot{\xi}_{2}(t) =-\xi _{1}\xi _{3}-x^{2}\xi _{3}^{2}, \\
\dot{\xi}_{3}(t) =0.
\end{gather*}
Since $\dot{\xi}_{3}(t) =0$, we have $\xi _{3}(t)
=c_{3} $ for some constant $c_{3}$. When $c_{3}=0$, one has
$x(t) =( c_{1}t+d_{1},c_{2}t+d_{2},(c_{1}d_{2}-c_{2}d_{1}) t+d_{3})$, and this implies that $k(t) =0$ and
$\tau (t) =0$; when $c_{3}> 0$, one has
\begin{gather}\label{gode1}
x(t) = \big( x^{1}(t) ,x^{2}(t),x^{3}(t) \big),
\end{gather}
where
\begin{gather*}
 x^{1}(t) = a_{1}\sin ( 2c_{3}t) +a_{2}\cos (2c_{3}t) +d_{1}, \\
x^{2}(t) = -a_{2}\sin ( 2c_{3}t) +a_{1}\cos (2c_{3}t) +d_{2}, \\
x^{3}(t) =( a_{2}d_{1}+a_{1}d_{2}) \sin (2c_{3}t) +( a_{2}d_{2}-a_{1}d_{1}) \cos (
2c_{3}t)+2c_{3}\big( a_{1}^{2}+a_{2}^{2}\big) t+d_{3}.
\end{gather*}
Hence $k(t) =-\frac{1}{[ ( a_{1}^{2}+a_{2}^{2}) ] ^{\frac{1}{2}}}<0$ and $\tau (t) =0;$ f\/inally, when $c_{3}< 0$, one has
\begin{gather}\label{gode2}
x(t) = \big( x^{1}(t) ,x^{2}(t),x^{3}(t) \big),
\end{gather}
where
\begin{gather*}
x^{1}(t) = a_{1}\sin (-2c_{3}t) +a_{2}\cos (-2c_{3}t) +d_{1}, \\
x^{2}(t) = a_{2}\sin (-2c_{3}t) -a_{1}\cos (-2c_{3}t) +d_{2}, \\
x^{3}(t) = ( a_{1}d_{1}+a_{2}d_{2}) \sin (-2c_{3}t) -( a_{2}d_{1}-a_{1}d_{2}) \cos (
-2c_{3}t)+2c_{3}\big( a_{1}^{2}+a_{2}^{2}\big) t+d_{3}.
\end{gather*}
Hence $k(t) =\frac{1}{[ ( a_{1}^{2}+a_{2}^{2}) %
] ^{\frac{1}{2}}}>0$ and $\tau (t) =0$.

The calculations above show that a horizontal curve is congruent to a geodesic
if it has positive constant $p$-curvature. Conversely, it is easy to prove
that any geodesic acted by a symmetry is still a geodesic. Therefore we
complete the proof of Theorem \ref{chaofgeo}.
\end{proof}

\begin{Remark}
Actually, the geodesics (\ref{gode1}) for $c_{3}>0$ and (\ref{gode2}) for $c_{3}<0$ travel along the same path with reverse direction.
\end{Remark}

\section[Dif\/ferential invariants of parametrized surfaces in $H_1$]{Dif\/ferential invariants of parametrized surfaces in $\boldsymbol{H_1}$}\label{invpasur}

\subsection{The proof of Theorem \ref{main4}}

Let $F\colon U\rightarrow H_{1}$ be a normal parametrized surface with $a,b,c,l$ and $m$ as the coef\/f\/icients in~(\ref{coeofform}). Denote the lift $\widetilde{F}$ of $F$ to ${\rm PSH}(1)$ as
\begin{gather*}
\widetilde{F}=\langle F(u,v);X(u,v),Y(u,v),T\rangle ,
\end{gather*} where $X(u,v):=F_{u}(u,v)$, $Y(u,v):=JX(u,v)$. As long as $F$ is given, $X=F_u$, $Y=JX$ and the Reeb vector f\/ield $T$ are uniquely determined, and so the lift $\widetilde{F}$ is unique. For convenience, henceforward we denote $F(u,v)$, by $F$, $X(u,v)$ by $X$, and $Y(u,v)$ by $Y$. We begin from deriving the Darboux derivative $\widetilde{F}^{\ast }\omega $ of $\widetilde{F}$.

On one hand, we can write the vector $F_u$ in terms of the frame
\begin{gather}\label{haha11}
{\rm d}F\left(\frac{\partial}{\partial u}\right)=F_u= X+ \langle F_u, Y\rangle Y + \langle F_u, T\rangle T;
\end{gather}actually the last two terms are zero since $F_u=X, Y, T$ are orthonormal.
On the other hand, by the moving frame formula (\ref{movingframe}),
\begin{gather}\label{haha1}
{\rm d}F =X\big(\widetilde{F}^{\ast }\omega ^{1}\big)+Y\big(\widetilde{F}^{\ast }\omega
^{2}\big)+T\big(\widetilde{F}^{\ast }\omega ^{3}\big).
\end{gather}
Apply $\frac{\partial}{\partial u}$ to (\ref{haha1})
\begin{gather*}
{\rm d}F\left(\frac{\partial }{\partial u}\right)=X\big(\widetilde{F}^{\ast }\omega ^{1}\big)\left(\frac{\partial }{\partial u}\right)+Y\big(\widetilde{F}^{\ast }\omega ^{2}\big)\left(\frac{\partial }{\partial u}\right)+T\big(\widetilde{F}^{\ast }\omega ^{3}\big)\left(\frac{\partial }{\partial u}\right),
\end{gather*}
and compare the coef\/f\/icients with those in (\ref{haha11}) we have
\begin{gather}\label{coeff1}
\big(\widetilde{F}^{\ast }\omega ^{1}\big)\left(\frac{\partial }{\partial u}\right)=1,\qquad \text{and} \qquad \big(\widetilde{F}^{\ast }\omega ^{2}\big)\left(\frac{\partial }{\partial u}\right)=\big(\widetilde{F}^{\ast }\omega ^{3}\big)\left(\frac{\partial }{\partial u}\right)=0.
\end{gather}
Similarly by applying $\frac{\partial}{\partial v}$ to (\ref{haha1}), one has
\begin{gather}
\big(\widetilde{F}^{\ast }\omega ^{1}\big)\left(\frac{\partial }{\partial v}\right)
=\langle F_{v},X\rangle =a, \qquad \big(\widetilde{F}^{\ast }\omega ^{2}\big)\left(\frac{\partial }{\partial v}\right)
=\langle F_{v},Y\rangle =b, \nonumber\\
\big(\widetilde{F}^{\ast }\omega ^{3}\big)\left(\frac{\partial }{\partial v}\right) =\langle F_{v},T\rangle =c.\label{coeff2}
\end{gather}
Combine (\ref{coeff1}) and (\ref{coeff2}) to get
\begin{gather}\label{Darder1}
\widetilde{F}^{\ast }\omega ^{1} ={\rm d}u+a{\rm d}v, \qquad
\widetilde{F}^{\ast }\omega ^{2} =b{\rm d}v, \qquad
\widetilde{F}^{\ast }\omega ^{3} =c{\rm d}v.
\end{gather}

To derive $\widetilde{F}^*{\omega_1}^2$, we use (\ref{movingframe}) again and repeat the same process above.
By (\ref{Darder1}),
\begin{gather}
{\rm d}X\left(\frac{\partial}{\partial u}\right)=\underbrace{\langle X_u, X \rangle}_{=0}X +\langle X_u, Y\rangle Y+\langle X_u, T\rangle T =Y\big(\widetilde{F}^{\ast }{\omega _1}^{2}\big)\left(\frac{\partial}{\partial u}\right)+T\big(\widetilde{F}^{\ast}\omega ^{2}\big)\left(\frac{\partial}{\partial u}\right) \nonumber \\
\hphantom{{\rm d}X\left(\frac{\partial}{\partial u}\right)}{} =Y\big(\widetilde{F}^{\ast }{\omega _1}^{2}\big)\left(\frac{\partial}{\partial u}\right),\label{v1}
\end{gather}and so $\langle X_u, T\rangle =0$.
Again,
\begin{gather}
{\rm d}X\left(\frac{\partial}{\partial v}\right)=\underbrace{\langle X_v, X \rangle}_{=0}X +\langle X_v, Y\rangle Y+\langle X_v, T\rangle T =Y\big(\widetilde{F}^{\ast }{\omega _1}^{2}\big)\left(\frac{\partial}{\partial v}\right)+ b T, \label{v2}
\end{gather}one has $\langle X_v, T\rangle = b$.
Since ${\rm d}X={\rm d}F_{u}=F_{uu}{\rm d}u+F_{uv}{\rm d}v$, (\ref{v1}) and (\ref{v2}) imply that
\begin{gather*}
\big(\widetilde{F}^{\ast }{\omega _1}^{2}\big)\left(\frac{\partial }{\partial u}\right)=\langle F_{uu},Y\rangle =l ,\qquad
\big(\widetilde{F}^{\ast }{\omega _{1}}^{2}\big)\left(\frac{\partial }{\partial v}\right)=\langle F_{uv},Y\rangle =m.
\end{gather*}
In conclusion, we have
\begin{gather}\label{Darder2}
{\omega_1}^2=l{\rm d}u+m{\rm d}v,\qquad
b=\langle F_{uv},T\rangle , \nonumber \qquad
0=\langle F_{uu},T\rangle =\langle F_{uv},X\rangle =\langle F_{uu},X\rangle.
\end{gather}
Therefore, by (\ref{Darder1}) and (\ref{Darder2}) we have reached the Darboux derivative
\begin{gather}\label{Darder9}
\widetilde{F}^{\ast }\omega =\left(
\begin{matrix}
0 & 0 & 0 & 0 \\
{\rm d}u+a{\rm d}v & 0 & -l{\rm d}u-m{\rm d}v & 0 \\
b{\rm d}v & l{\rm d}u+m{\rm d}v & 0 & 0 \\
c{\rm d}v & b{\rm d}v & -{\rm d}u-a{\rm d}v & 0%
\end{matrix}
\right) .
\end{gather}
Note that the coef\/f\/icients $a$, $b$, $c$, $l$, $m$ uniquely determine the Darboux derivative in \eqref{Darder9}, and so the proof of uniqueness is completed.

For existence, suppose $a$, $b$, $c$ and $m$, $l$ are functions def\/ined on~$U$. Suppose~$\phi$ is the ${\mathfrak{psh}}(1)$-valued 1-form def\/ined by~\eqref{Darder9}. Then
\begin{gather}\label{Darder10}
{\rm d}\phi =\left(
\begin{matrix}
0 & 0 & 0 & 0 \\
\frac{\partial a}{\partial u} & 0 & \frac{\partial l}{\partial v}-\frac{%
\partial m}{\partial u} & 0 \vspace{1mm}\\
\frac{\partial b}{\partial u} & -\frac{\partial l}{\partial v}+\frac{%
\partial m}{\partial u} & 0 & 0 \vspace{1mm}\\
\frac{\partial c}{\partial u} & \frac{\partial b}{\partial u} & -\frac{%
\partial a}{\partial u} & 0
\end{matrix}
\right) {\rm d}u\wedge {\rm d}v,
\end{gather}
and
\begin{gather}\label{Darder11}
\phi \wedge \phi =\left(
\begin{matrix}
0 & 0 & 0 & 0 \\
-lb & 0 & 0 & 0 \\
al-m & 0 & 0 & 0 \\
-2b & -m+al & bl & 0%
\end{matrix}%
\right) {\rm d}u\wedge {\rm d}v.
\end{gather}
Thus, $\phi $ satisf\/ies the integrability condition ${\rm d}\phi=-\phi \wedge \phi $ if and only if the coef\/f\/icients~$a$,~$b$,~$c$,~$l$ and~$m$ satisfy the
integrability condition~\eqref{intcons1}. Therefore Theorem~\ref{ft2} implies there exists a~map
\begin{gather*}
\widetilde{F}^{\ast }(u,v)= ( F(u,v); X(u,v),Y(u,v),T )
\end{gather*}%
such that $\widetilde{F}^{\ast }\omega =\phi $. Finally, the moving frame formula \eqref{movingframe} implies that $F\colon U\rightarrow H_{1}$ is a map with $a$, $b$, $c$, $l$ and $m$ as the coef\/f\/icients of f\/irst kind and second kind respectively.

\subsection{Invariants of surfaces}

Let $\Sigma \hookrightarrow H_1$ be a surface such that all points on $\Sigma$ are non-singular. For each point $p\in \Sigma $, one can choose a normal parametrization $F\colon (u,v)\in U\rightarrow \Sigma $ around $p$ such that
\begin{gather*}
F_{u}=\frac{\partial F}{\partial u}=X,
\end{gather*}
where $X$ is an unit vector f\/ield def\/ining the characteristic foliation. The following lemma characterizes the normal coordinates.

\begin{lem}\label{norcor}
The normal coordinates are determined up to a transformation of the form
\begin{gather*}
\widetilde{u} =\pm u+g(v), \qquad \widetilde{v} =h(v),
\end{gather*}
for some smooth functions $g(v)$, $h(v)$ with $\frac{\partial h}{\partial v}\neq 0$.
\end{lem}

\begin{proof}
Suppose that $(\widetilde{u},\widetilde{v})$ is any normal coordinates around $p$, i.e.,
\begin{gather*}
F_{\widetilde{u}}=\widetilde{X},
\end{gather*}
where $\widetilde{X}=\pm X$. We have the formula for the change of the coordinates
\begin{gather}
F_{u}=F_{\widetilde{u}}\frac{\partial \widetilde{u}}{\partial u}+F_{\widetilde{v}}\frac{\partial \widetilde{v}}{\partial u}, \qquad
F_{v}=F_{\widetilde{u}}\frac{\partial \widetilde{u}}{\partial v}+F_{\widetilde{v}}\frac{\partial \widetilde{v}}{\partial v}.\label{tlofnorpar}
\end{gather}
Expand $F_{\widetilde{v}}=\widetilde{a}\widetilde{X}+\widetilde{b}\widetilde{
Y}+\widetilde{c}\widetilde{T}$ by the orthonormal basis $\{\widetilde{X},\widetilde{Y},\widetilde{T} \}$. The f\/irst identity of~\eqref{tlofnorpar} implies
\begin{gather}
X=F_u=\widetilde{X}\frac{\partial \widetilde{u}}{\partial u}+\left(\widetilde{a}\frac{\partial \widetilde{v}}{\partial u}\widetilde{X} +\widetilde{b}\frac{%
\partial \widetilde{v}}{\partial u}\widetilde{Y}+\widetilde{c}\frac{\partial\widetilde{v}}{\partial u}\widetilde{T}\right)  =\left(\frac{\partial \widetilde{u}}{\partial u}+\widetilde{a}\frac{\partial \widetilde{v}}{\partial u}\right)\widetilde{X}+ \widetilde{b}\frac{\partial \widetilde{v}}{\partial u}\widetilde{Y}+\widetilde{c}\frac{\partial\widetilde{v}}{\partial u}\widetilde{T}.\!\!\!\!\!\label{tlofnorpar2}
\end{gather}
Since $p$ is a non-singular point, we see that $\widetilde{c}\neq 0$ around $p$, and so
\begin{gather*}
\frac{\partial \widetilde{v}}{\partial u}=0,
\end{gather*}
namely, $ \widetilde{v}=h(v)$
for some function $h(v)$. In addition, comparing the coef\/f\/icient of $X$ in~\eqref{tlofnorpar}, \eqref{tlofnorpar2}, we
have
\begin{gather*}
\pm 1=\frac{\partial \widetilde{u}}{\partial u}+\widetilde{a}\frac{\partial\widetilde{v}}{\partial u}=\frac{\partial \widetilde{u}}{\partial u},
\end{gather*}and hence $\widetilde{u}=\pm u+g(v)$ for some function $g(v)$. Finally we
compute
\begin{gather*}
\det{\left(%
\begin{matrix}
\frac{\partial \widetilde{u}}{\partial u} & \frac{\partial \widetilde{u}}{%
\partial v} \vspace{1mm}\\
\frac{\partial \widetilde{v}}{\partial u} & \frac{\partial \widetilde{v}}{%
\partial v}%
\end{matrix}%
\right)}=\det{\left(%
\begin{matrix}
\pm 1 & \frac{\partial g}{\partial v} \vspace{1mm}\\
0 & \frac{\partial h}{\partial v}
\end{matrix}
\right)}=\pm \frac{\partial h}{\partial v}\neq 0,
\end{gather*}
and the result follows.
\end{proof}

As what we did in \eqref{Darder9}, we can also derive the Darboux derivatives $\widetilde{F}^{*}\omega$ for the normal parametrization. One obtains four 1-forms locally def\/ined on the surface $\Sigma$:
\begin{gather}
{\rm I}=\widetilde{F}^{*}\omega^{1}={\rm d}u+a{\rm d}v, \qquad {\rm II}=\widetilde{F}^{*}\omega^{2}=b{\rm d}v,
\qquad {\rm III}=\widetilde{F}^{*}\omega^{3}=c{\rm d}v, \nonumber \\
{\rm IV}=\widetilde{F}^{*}{\omega_{1}}^{2}=l{\rm d}u+m{\rm d}v,\label{ffs}
\end{gather}
where the functions $a$, $b$, $c$, $m$ and $l$ are def\/ined as~\eqref{coeofform}. Next we show that those 1-forms are invariant under the change of coordinates.

\begin{prop}\label{propofform}
Suppose $\widetilde{\rm I}$, $\widetilde{\rm II}$, $\widetilde{\rm III}$, $\widetilde{\rm IV}$ are those defined as \eqref{ffs} with respect to the other normal coordinates $(\widetilde{u},\widetilde{v})$. Then we have
\begin{gather}\label{invariant14}
\widetilde{\rm I}=\pm {\rm I},\qquad \widetilde{\rm II}=\pm {\rm II},\qquad \widetilde{\rm III}={\rm III}, \qquad \widetilde{\rm IV}={\rm IV}.
\end{gather}
\end{prop}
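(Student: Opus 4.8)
\emph{Proof proposal.} Write $\widehat F$ for the parametrization of $\Sigma$ in the $(\widetilde u,\widetilde v)$-coordinates, so that $F=\widehat F\circ\psi$ with $\psi\colon(u,v)\mapsto(\widetilde u,\widetilde v)$. By Lemma \ref{norcor}, $\psi$ has the form $\widetilde u=\varepsilon u+g(v)$, $\widetilde v=h(v)$ with $\varepsilon\in\{+1,-1\}$ and $h'(v)\neq0$; moreover, since $X=F_u$ and $\widetilde X=\widehat F_{\widetilde u}$ are both unit vectors along the (sign-ambiguous) characteristic foliation, $\widetilde X=\varepsilon X$, and the proof of Lemma \ref{norcor} shows this is the \emph{same} $\varepsilon$, whence also $\widetilde Y=J\widetilde X=\varepsilon Y$ and $\widetilde T=T$. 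The identities (\ref{invariant14}) are thus to be read on the common domain, i.e.\ $\psi^{*}\widetilde I=\varepsilon I$, $\psi^{*}\widetilde{II}=\varepsilon II$, $\psi^{*}\widetilde{III}=III$, $\psi^{*}\widetilde{IV}=IV$.

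The conceptual route: let $\widehat{\widetilde F}$ be the lift of $\widehat F$ attached to its characteristic frame $(\widehat F;\widetilde X,\widetilde Y,T)$. Then $\widehat{\widetilde F}\circ\psi$ is again a lift of $F=\widehat F\circ\psi$, hence differs from $\widetilde F$ by right translation by an $SO(2)$-valued map; by the frame relations above this map is the \emph{constant} element $\sigma=\mathrm{diag}(1,\varepsilon,\varepsilon,1)\in PSH(1)$, the image of $\varepsilon I_{2}\in SO(2)$. Therefore $(\widehat{\widetilde F}\circ\psi)^{*}\omega=\mathrm{Ad}(\sigma^{-1})(\widetilde F^{*}\omega)$, and since $\sigma^{-1}=\sigma$ is diagonal, $\mathrm{Ad}(\sigma^{-1})\xi=\sigma\xi\sigma$ acts entrywise by $\xi_{ij}\mapsto d_id_j\xi_{ij}$ with $d=(1,\varepsilon,\varepsilon,1)$. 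Applying this to the matrix (\ref{Darder9}) and comparing with the general shape of such a Darboux derivative — whose $(2,1),(3,1),(4,1),(2,3)$ entries are here $\psi^{*}\widetilde I,\ \psi^{*}\widetilde{II},\ \psi^{*}\widetilde{III},\ -\psi^{*}\widetilde{IV}$, and whose rescaled values are $\varepsilon I,\ \varepsilon II,\ III,\ -IV$ — gives (\ref{invariant14}); the remaining entries merely repeat these relations, consistently, since $\mathrm{Ad}(\sigma^{-1})$ preserves $psh(1)$.

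One can also verify (\ref{invariant14}) by a direct computation in the spirit of this section. Differentiating $F=\widehat F\circ\psi$ in $v$ gives $F_v=g'\,\widehat F_{\widetilde u}+h'\,\widehat F_{\widetilde v}=\varepsilon g'\,X+h'\,\widehat F_{\widetilde v}$, so $\widehat F_{\widetilde v}=\tfrac{1}{h'}(F_v-\varepsilon g'\,X)$; using $\partial_{\widetilde u}=\varepsilon\,\partial_u$ at fixed $\widetilde v$ and $\partial_{\widetilde v}=-\tfrac{\varepsilon g'}{h'}\partial_u+\tfrac{1}{h'}\partial_v$ at fixed $\widetilde u$, together with $\widehat F_{\widetilde u}=\varepsilon F_u$, one gets $\widehat F_{\widetilde u\widetilde u}=F_{uu}$ and $\widehat F_{\widetilde u\widetilde v}=-\tfrac{g'}{h'}F_{uu}+\tfrac{\varepsilon}{h'}F_{uv}$. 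Taking inner products with $\widetilde X=\varepsilon X$, $\widetilde Y=\varepsilon Y$, $\widetilde T=T$ and using $\langle X,X\rangle=1$, $\langle X,Y\rangle=\langle X,T\rangle=0$ yields
\[
\widetilde a=\frac{\varepsilon a-g'}{h'},\qquad \widetilde b=\frac{\varepsilon b}{h'},\qquad \widetilde c=\frac{c}{h'},\qquad \widetilde l=\varepsilon l,\qquad \widetilde m=\frac{m-\varepsilon g'l}{h'}.
\]
Substituting these, with $\psi^{*}d\widetilde u=\varepsilon\,du+g'\,dv$ and $\psi^{*}d\widetilde v=h'\,dv$, into $\widetilde I=d\widetilde u+\widetilde a\,d\widetilde v$, $\widetilde{II}=\widetilde b\,d\widetilde v$, $\widetilde{III}=\widetilde c\,d\widetilde v$ and $\widetilde{IV}=\widetilde l\,d\widetilde u+\widetilde m\,d\widetilde v$, the $g'$-terms cancel and one is left with $\varepsilon I$, $\varepsilon II$, $III$ and $IV$.

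The only genuinely delicate point is the sign bookkeeping: one must use that the $\varepsilon$ in $\widetilde u=\varepsilon u+g(v)$ coincides with the one in $\widetilde X=\varepsilon X$ (this is exactly the content of the step $\partial\widetilde u/\partial u=\pm1$ in the proof of Lemma \ref{norcor}), and one must notice that the second-kind coefficient $l$ itself changes sign, $\widetilde l=\varepsilon l$, while the form $\widetilde{IV}$ does not, because the sign of $d\widetilde u$ cancels that of $\widetilde l$. The chain-rule computation of the mixed partials $\widehat F_{\widetilde u\widetilde u}$, $\widehat F_{\widetilde u\widetilde v}$ is the other place that needs care; the rest is routine.
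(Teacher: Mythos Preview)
Your proof is correct. The ``direct computation'' paragraph is essentially the paper's own argument: the paper derives the same transformation laws $a=\pm(g'+h'\widetilde a)$, $b=\pm h'\widetilde b$, $c=h'\widetilde c$, $l=\pm\widetilde l$, $m=g'\widetilde l+h'\widetilde m$ (equivalent to yours after solving for the tilded quantities) and then asserts that (\ref{invariant14}) follows, without writing out the final substitution into the one-forms that you spell out explicitly.

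Your ``conceptual route'' via the adjoint action is genuinely different from the paper and worth noting. The paper works purely at the level of coefficients and never invokes the identity $(\widetilde F\sigma)^{*}\omega=\mathrm{Ad}(\sigma^{-1})(\widetilde F^{*}\omega)$ for constant $\sigma$; your observation that the two lifts differ by the \emph{constant} right translation $\sigma=\mathrm{diag}(1,\varepsilon,\varepsilon,1)$, so that the Darboux derivatives differ by conjugation by $\sigma$, explains in one stroke why $I,II$ flip sign while $III,IV$ do not --- it is precisely the pattern $d_id_j$ on the $psh(1)$-matrix entries. This buys a cleaner conceptual picture and would generalize immediately to larger structure groups, whereas the paper's coefficient-by-coefficient computation is more hands-on but makes the cancellations (in particular the $g'$-cancellation in $IV$) look somewhat accidental.
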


\begin{proof}
Suppose $\widetilde{a}$, $\widetilde{b}$, $\widetilde{c}$, $\widetilde{l}$, $\widetilde{m}$ are the coef\/f\/icients of f\/irst and second kinds with respect to the normal coordinates $(\widetilde{u}$, $\widetilde{v})$. We point out that all such the coef\/f\/icients have the same expression as in~\eqref{coeofform} w.r.t.\ the new coordinates except for $\widetilde{X}=\pm X$ and $\widetilde{Y}=J\widetilde{X}=\pm Y$.

By Lemma~\ref{norcor}, there exists two functions $g(v)$ and $h(v)$ such that
\begin{gather*}
\widetilde{u}=\pm u+g(v), \qquad \widetilde{v}=h(v).
\end{gather*}
Now we compute the transformation laws of those coef\/f\/icients:
\begin{gather}
a=\langle F_{v},X\rangle =\left\langle F_{\widetilde{u}}\frac{\partial \widetilde{u}}{\partial v}+F_{\widetilde{v}}\frac{\partial \widetilde{v}}{\partial v},X\right\rangle=\left\langle \pm X\frac{\partial g}{\partial v}+F_{\widetilde{v}}\frac{\partial h}{\partial v},X\right\rangle
=\pm \left(\frac{\partial g}{\partial v}+\frac{\partial h}{\partial v}
\widetilde{a}\right).\label{tlofc}
\end{gather}
Similarly, we have
\begin{gather} \label{tlofc1}
b=\pm \frac{\partial h}{\partial v}\widetilde{b},\qquad c=\frac{\partial h}{\partial v}\widetilde{c},
\end{gather}
and so
$F_{u}=\pm F_{\widetilde{u}}$ and $F_{uu}=\pm\big(F_{\widetilde{u}\widetilde{u}}\frac{\partial \widetilde{u}}{\partial u}+F_{\widetilde{u}\widetilde{v}}\frac{\partial \widetilde{v}}{\partial u}\big)=F_{\widetilde{u}\widetilde{u}}$. Thus
\begin{gather} \label{tlofc2}
l=\pm \widetilde{l}.
\end{gather}
Similarly
\begin{gather} \label{tlofc3}
m=\frac{\partial g}{\partial v}\widetilde{l}+\frac{\partial h}{\partial v}\widetilde{m}.
\end{gather}
From the transformation laws \eqref{tlofc}, \eqref{tlofc1}, \eqref{tlofc2}, \eqref{tlofc3}, the result \eqref{invariant14} follows.
\end{proof}

\begin{Remark}
In the proof \eqref{tlofc1}, denote
\begin{gather}\label{alpha1}
\alpha=\frac{b}{c}, \qquad \widetilde{\alpha}=\frac{\widetilde{b}}{\widetilde{c}},
\end{gather} then we have $\alpha=\pm\widetilde{\alpha}$. Actually, $\alpha$ is a function def\/ined on the non-singular part of $\Sigma$, independent of the choice of the normal coordinates up to a sign, such that $\alpha e_{2}+T\in T\Sigma$, and hence an invariant of $\Sigma$ on the non-singular part.
Similarly, from~\eqref{tlofc2}, so is for~$l$, which actually is the $p$-mean curvature.
\end{Remark}

\begin{Remark}
We point out that the signs appearing for $\alpha$ and $l$ are due to the dif\/ferent choices of the orientations. Indeed, if one chooses the normal coordinates with respect to a f\/ixed orientation of the characteristic foliation, then we have $\alpha =\widetilde{\alpha }$ and $l=\widetilde{l}$.
\end{Remark}

Besides the invariants $\alpha$ and $l$, we now proceed to the other invariant of $\Sigma$. Actually, by Proposition~\ref{propofform}, we have
\begin{gather*}
{\rm I}\otimes {\rm I}+{\rm II}\otimes {\rm II}+{\rm III}\otimes {\rm III}=\widetilde{\rm I}\otimes \widetilde{\rm I}+\widetilde{\rm II}\otimes \widetilde{\rm II}+\widetilde{\rm III}\otimes \widetilde{\rm III}.
\end{gather*}
Therefore the dif\/ferential form ${\rm I}\otimes {\rm I}+{\rm II}\otimes {\rm II}+{\rm III}\otimes {\rm III}$ again is independent of the choices of the normal coordinates, and hence an invariant of~$\Sigma$. Next we characterize this invariant.

\begin{lem}\label{lastinv}
Let $g_{\Theta}$ be the adapted metric on $H_1$. Then we have
\begin{gather*}
g_{\Theta}|_{\Sigma}={\rm I}\otimes {\rm I}+{\rm II}\otimes {\rm II}+{\rm III}\otimes {\rm III}
\end{gather*}
defined on the non-singular part of $\Sigma$.
\end{lem}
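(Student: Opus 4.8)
The plan is to observe that, once the Darboux derivative~(\ref{Darder9}) is in hand, this lemma is essentially a restatement of the definition of the adapted metric. The adapted metric $g_{\Theta}$ on $H_1$ is the Riemannian metric characterized by: $g_{\Theta}$ restricted to $\xi$ is the Levi metric $h$; the Reeb (characteristic) vector field $T=\frac{\partial}{\partial z}$ has unit length; and $T$ is orthogonal to $\xi$. Therefore, for the oriented frame $(F;X,Y,T)$ attached to a normal parametrization $F:U\rightarrow\Sigma$ — where $X=F_{u}$ is Levi-unit, $Y=JX$, and $T=\frac{\partial}{\partial z}$ — the triple $(X,Y,T)$ is a $g_{\Theta}$-orthonormal frame along $F(U)$. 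Since $F$ is locally an embedding onto $\Sigma$, the restricted metric $g_{\Theta}|_{\Sigma}$ is computed through the parametrization as the pullback $F^{*}g_{\Theta}$.

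Next I would use the decomposition of $dF$ already obtained in~(\ref{haha1}) together with~(\ref{ffs}):
\begin{equation*}
dF = X\,(\widetilde{F}^{*}\omega^{1}) + Y\,(\widetilde{F}^{*}\omega^{2}) + T\,(\widetilde{F}^{*}\omega^{3}) = X\,I + Y\,II + T\,III .
\end{equation*}
Hence for any vectors $V,W$ tangent to $U$, the images $dF(V)$ and $dF(W)$ have coordinates $(I(V),II(V),III(V))$ and $(I(W),II(W),III(W))$ in the orthonormal frame $(X,Y,T)$, so that
\begin{equation*}
(F^{*}g_{\Theta})(V,W) = g_{\Theta}\big(dF(V),dF(W)\big) = I(V)I(W)+II(V)II(W)+III(V)III(W) .
\end{equation*}
As $V,W$ are arbitrary, this is exactly the identity $F^{*}g_{\Theta}=I\otimes I+II\otimes II+III\otimes III$ of $2$-tensors on $U$. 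Transporting along the parametrization gives the stated equality on $F(U)\subset\Sigma$.

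To finish, I would invoke Proposition~\ref{propofform}: the right-hand side $I\otimes I+II\otimes II+III\otimes III$ is independent of the choice of normal coordinates, the sign ambiguities $\widetilde{I}=\pm I$, $\widetilde{II}=\pm II$ cancelling in each quadratic term (and $\widetilde{III}=III$). Thus the tensor $I\otimes I+II\otimes II+III\otimes III$ is well defined on all of the non-singular part of $\Sigma$ and agrees there with $g_{\Theta}|_{\Sigma}$.

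I do not expect any genuine obstacle here: the entire content sits in the moving-frame computation that produced~(\ref{Darder9}) and in the fact — built into the definition of $g_{\Theta}$ — that the frame $(X,Y,T)$ is orthonormal. The only points worth making explicit are that $g_{\Theta}|_{\Sigma}$ really is $F^{*}g_{\Theta}$ read through the parametrization, and the sign cancellation just described; both are routine.
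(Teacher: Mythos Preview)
Your proposal is correct and is essentially the same approach as the paper's: the paper simply states that the lemma ``is a direct consequence of the moving frame formula,'' and your argument spells out exactly that consequence --- using $dF = X\,I + Y\,II + T\,III$ together with the $g_{\Theta}$-orthonormality of $(X,Y,T)$ to read off $F^{*}g_{\Theta}$. Your additional remark on coordinate-independence via Proposition~\ref{propofform} is fine but not strictly needed for the lemma as stated.
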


\begin{proof}This lemma is a direct consequence of \eqref{ffs}, \eqref{movingframe}, and $g_{\Theta}|_{\Sigma}={\rm d}p\otimes {\rm d}p$.
\end{proof}

Finally we mention that although the $1$-forms I, II, III, IV are only def\/ined on the non-singular points, the invariant ${\rm I}\otimes {\rm I}+{\rm II}\otimes {\rm II}+{\rm III}\otimes {\rm III}$ can be smoothly extended to the whole surface $\Sigma$ by Lemma~\ref{lastinv}.

\subsection[A complete set of invariants for surfaces in $H_1$]{A complete set of invariants for surfaces in $\boldsymbol{H_1}$}

In this section, we will obtain the last invariant ${\rm IV}=\widetilde{F}^{\ast}{\omega _{1}}^{2}$, which is completely determined by the invariants $\alpha$, $g_{\Theta }$, $l$. We therefore have a complete set of invariants for the non-singular part of the surfaces in~$H_1$.

Let $f\colon\Sigma \rightarrow H_{1}$ be an embedding oriented surface in~$H_1$. For convenience, we will not distinguish the surfaces $\Sigma $ and $f(\Sigma )$. At any non-singular point $p\in\Sigma $, we choose the orthonormal frame $(p;e_{1},e_{2},T)$, where $e_{1} $ is tangent to the characteristic foliation and $e_{2}=Je_{1}$. A Darboux frame is a moving frame which is smoothly def\/ined on $\Sigma $ except for the singular points, and hence there exists a lifting of $f$ to ${\rm PSH}(1)$ def\/ined by~$F$. Now we would like to compute the Darboux derivative $F^{\ast }\omega $ of~$F$. In the following, we abuse the notation $F^{\ast }\omega $ by taking
\begin{gather*}
\omega =\left(
\begin{matrix}
0 & 0 & 0 & 0 \\
\omega ^{1} & 0 & -\omega _{1}{}^{2} & 0 \\
\omega ^{2} & \omega _{1}{}^{2} & 0 & 0 \\
\omega ^{3} & \omega ^{2} & -\omega ^{1} & 0%
\end{matrix}
\right) ,
\end{gather*}
to \looseness=-1 express the Darboux derivative. It satisf\/ies the integrability condition
${\rm d}\omega +\omega \wedge \omega =0$, that is,
\begin{gather}\label{mcsteq}
{\rm d}\omega ^{1} ={\omega _{1}}^{2}\wedge \omega ^{2}, \qquad
{\rm d}\omega ^{2} =-{\omega _{1}}^{2}\wedge \omega ^{1}, \qquad
{\rm d}\omega ^{3} =2\omega ^{1}\wedge \omega ^{2}, \qquad
{\rm d}{\omega _{1}}^{2} =0.
\end{gather}

Let $g_{\Theta}=h+\Theta^{2}$ be the adapted metric. By \eqref{ffs} we know $\omega^{2}=\alpha \omega^{3}$ on the non-singular part of $\Sigma$, and it is easy to see that
\begin{gather*}
g_{\Theta}|_{\Sigma}=\omega^{1}\otimes \omega^{1}+\omega^{2}\otimes\omega^{2}+\omega^{3}\otimes \omega^{3}=\omega^{1}\otimes \omega^{1}+\big(1+\alpha^2\big)\omega^{3}\otimes \omega^{3}.
\end{gather*}
Set
\begin{gather} \label{baeq1}
\hat{\omega}^{1}=\omega^{1}, \qquad
\hat{\omega}^{2}=\sqrt{1+\alpha^{2}}\omega^{3},
\end{gather}
which \looseness=-1 form an orthonormal coframe on $\Sigma$ w.r.t.\ the metric $g_{\Theta}|_{\Sigma}$; the corresponding dual frame is
\begin{gather*}
\hat{e}_{1}=e_{1}, \qquad
\hat{e}_{2}=e_{\Sigma}=\frac{\alpha e_{2}+T}{\sqrt{1+\alpha^{2}}}.
\end{gather*}
If ${\hat{\omega}_1}{}^{2}$ is the Levi-Civita connection of $g_{\Theta}|_{\Sigma}$ with respect to the coframe $\hat{\omega}^{1}$, $\hat{\omega}^{2}$, by the fundamental theorem in Riemannian geometry, we have the structure equations
\begin{gather} \label{risteq}
{\rm d}\hat{\omega}^{1}=-\hat{\omega}_{2}{}^{1}\wedge \hat{\omega}^{2}, \qquad
{\rm d}\hat{\omega}^{2}=-\hat{\omega}_{1}{}^{2}\wedge \hat{\omega}^{1}, \qquad
\hat{\omega}_{1}{}^{2}=-\hat{\omega}_{2}{}^{1}.
\end{gather}

The following proposition shows that ${\omega_{1}}^{2}$ is completely determined by the induced f\/irst fundamental form $g_{\Theta}|_{\Sigma}$ and
the functions $\alpha$ and $l$ def\/ined in \ref{alpha1}.

\begin{prop}\label{prop1}
We have
\begin{gather*}
{\omega_{1}}^{2}=\frac{\alpha}{\sqrt{1+\alpha^{2}}}\hat{\omega}_{1}{}^{2}+\frac{l}{1+\alpha^{2}}\hat{\omega}^{1}+ \frac{e_{1}\alpha}{(1+\alpha^2)^{\frac{3}{2}}}\hat{\omega}^2=l\hat{\omega}^1+\frac{2\alpha^{2}+(e_{1}\alpha)}{\sqrt{1+\alpha^{2}}}\hat{\omega}^2, \\
\hat{\omega}_{1}{}^{2}=\frac{\alpha}{\sqrt{1+\alpha^{2}}}{\omega_{1}}^{2}+\frac{2\alpha}{1+\alpha^2}\hat{\omega}^{2}
=\frac{l\alpha}{\sqrt{1+\alpha^{2}}}\hat{\omega}^{1}+\left(2\alpha+\frac{
\alpha(e_{1}\alpha)}{1+\alpha^{2}}\right)\hat{\omega}^{2}.
\end{gather*}
\end{prop}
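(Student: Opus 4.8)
The plan is to use that $\Sigma$ is two--dimensional, so that both $\omega_1^2$ and the Levi--Civita connection form $\hat\omega_1^2$ are $1$--forms on $\Sigma$ and hence linear combinations of the orthonormal coframe $\hat\omega^1=\omega^1$, $\hat\omega^2=\sqrt{1+\alpha^2}\,\omega^3$ from (\ref{baeq1}); equivalently, of $\omega^1$ and $\omega^3$, which form a coframe on the non--singular part since $\omega^2=\alpha\omega^3$ there. Thus the whole statement becomes a bookkeeping exercise in matching coefficients, using the two sets of structure equations (\ref{mcsteq}) for $\omega$ and (\ref{risteq}) for $\hat\omega$.

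First I would determine $\omega_1^2$ itself. Writing $\omega_1^2=l\,\omega^1+p\,\omega^3$, the $\omega^1$--coefficient is $l$ because it equals $\omega_1^2(e_1)$, which is the $p$--mean curvature computed in (\ref{Darder2}), while $\omega^1(e_1)=1$ and $\omega^3(e_1)=0$ by the moving frame formula (\ref{movingframe}). To find $p$, substitute $\omega^2=\alpha\omega^3$ into $d\omega^2=-\omega_1^2\wedge\omega^1$: the left side is $d\alpha\wedge\omega^3+\alpha\,d\omega^3=d\alpha\wedge\omega^3+2\alpha^2\,\omega^1\wedge\omega^3$ using $d\omega^3=2\,\omega^1\wedge\omega^2$, and $d\alpha\wedge\omega^3=(e_1\alpha)\,\omega^1\wedge\omega^3$ since the $\omega^3$--part of $d\alpha$ drops out; the right side is $p\,\omega^1\wedge\omega^3$. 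Hence $p=e_1\alpha+2\alpha^2$, i.e.
\[
\omega_1^2=l\,\hat\omega^1+\frac{2\alpha^2+e_1\alpha}{\sqrt{1+\alpha^2}}\,\hat\omega^2,
\]
which is the second displayed identity. (Alternatively, $p$ can be read off from the normal--parametrization expression $\omega_1^2=l\,du+m\,dv$ together with $b_u=-al+m$, $c_u=2b$, and $\alpha=b/c$.)

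Next I would extract $\hat\omega_1^2$. Comparing $d\hat\omega^1=d\omega^1=\omega_1^2\wedge\omega^2=\frac{\alpha}{\sqrt{1+\alpha^2}}\,\omega_1^2\wedge\hat\omega^2$ with the first Riemannian structure equation $d\hat\omega^1=\hat\omega_1^2\wedge\hat\omega^2$ shows that $\big(\hat\omega_1^2-\frac{\alpha}{\sqrt{1+\alpha^2}}\,\omega_1^2\big)\wedge\hat\omega^2=0$, so on a surface this difference must be a multiple $\lambda\,\hat\omega^2$. To pin down $\lambda$ I would use the second Riemannian structure equation $d\hat\omega^2=-\hat\omega_1^2\wedge\hat\omega^1$: expand $d\hat\omega^2=d\big(\sqrt{1+\alpha^2}\,\big)\wedge\omega^3+\sqrt{1+\alpha^2}\,d\omega^3$, where $d\big(\sqrt{1+\alpha^2}\,\big)\wedge\omega^3=\frac{\alpha(e_1\alpha)}{\sqrt{1+\alpha^2}}\,\omega^1\wedge\omega^3$ and $d\omega^3=2\alpha\,\omega^1\wedge\omega^3$, and expand the right side using the formula for $\omega_1^2$ just obtained. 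Matching the coefficient of $\omega^1\wedge\omega^3$ yields $\lambda=\frac{2\alpha}{1+\alpha^2}$, that is the third displayed identity
\[
\hat\omega_1^2=\frac{\alpha}{\sqrt{1+\alpha^2}}\,\omega_1^2+\frac{2\alpha}{1+\alpha^2}\,\hat\omega^2.
\]

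The remaining two identities are then purely algebraic: substituting the second identity into the third yields the fourth, and inserting the third identity for $\hat\omega_1^2$ and then the second identity for the term $l\,\hat\omega^1$ into the right--hand side of the first identity makes it collapse to $\omega_1^2$ (the coefficients $\frac{\alpha^2}{1+\alpha^2}$ and $\frac{1}{1+\alpha^2}$ of $\omega_1^2$ add to $1$, and the $\hat\omega^2$--terms cancel). The only genuine subtlety is that the proportionality factor $\frac{\alpha}{\sqrt{1+\alpha^2}}$ relating $\hat\omega_1^2$ and $\omega_1^2$ is forced by $\omega^2=\alpha\omega^3$, and that one must use \emph{both} equations of (\ref{risteq}) — the first to see the relation has the stated shape, the second to evaluate $\lambda$; the rest is careful tracking of the $\sqrt{1+\alpha^2}$ factors, checking repeatedly that the $\hat\omega^2$--component of $d\alpha$ never contributes because it is always wedged with $\omega^3$.
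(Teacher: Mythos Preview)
Your argument is correct and uses the same ingredients as the paper: the Maurer--Cartan equations (\ref{mcsteq}), the Riemannian structure equations (\ref{risteq}), the relation $\omega^2=\alpha\omega^3$, and Cartan's lemma on a surface. The only difference is the order of bookkeeping. The paper first writes $\omega_1^2$ as a combination of $\hat\omega_1^2,\hat\omega^1,\hat\omega^2$ with one unknown coefficient $D$ (equation (\ref{baeq2})), then writes $\hat\omega_1^2$ in terms of $\omega_1^2,\hat\omega^2$ with unknown $A$ and in terms of $\hat\omega^1,\hat\omega^2$ with unknown $B$, and finally solves for $D,A,B$ simultaneously using $\omega_1^2(e_1)=l$. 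You instead start by pinning down $\omega_1^2$ completely in the basis $\hat\omega^1,\hat\omega^2$ (the second displayed identity) via $\omega_1^2(e_1)=l$ and $d\omega^2$, and only then turn to $\hat\omega_1^2$; this lets you carry a single unknown $\lambda$ and derive the first displayed identity last by pure substitution. Your ordering is slightly more economical, but the two arguments are otherwise identical in content.
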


\begin{proof}By $\omega ^{2}=\alpha \omega ^{3}$ and the second identity of \eqref{baeq1}, we have
\begin{gather*}
{\rm d}\omega ^{2} ={\rm d}\left( \frac{\alpha }{(1+\alpha ^{2})^{\frac{1}{2}}}\hat{\omega}^{2}\right) ={\rm d}\left( \frac{\alpha }{(1+\alpha ^{2})^{\frac{1}{2}}}\right) \wedge \hat{\omega}^{2}+\frac{\alpha }{(1+\alpha ^{2})^{\frac{1}{2}}}{\rm d}\hat{\omega}^{2} \\
\hphantom{{\rm d}\omega ^{2}}{} =e_{1}\left( \frac{\alpha }{(1+\alpha ^{2})^{\frac{1}{2}}}\right) \hat{\omega}^{1}\wedge \hat{\omega}^{2}-\frac{\alpha }{(1+\alpha ^{2})^{\frac{1}{2
}}}\hat{\omega}_{1}{}^{2}\wedge \hat{\omega}^{1} \\
\hphantom{{\rm d}\omega ^{2}}{} =\hat{\omega}^{1}\wedge \left( e_{1}\left( \frac{\alpha }{(1+\alpha ^{2})^{\frac{1}{2}}}\right) \hat{\omega}^{2}+\frac{\alpha }{(1+\alpha ^{2})^{\frac{1}{2}}}\hat{\omega}_{1}{}^{2}\right),
\end{gather*}
where we have used the second formula of the structure equation \eqref{risteq} at the third equality above. On the other hand,
from the Maurer--Cartan structure equation \eqref{mcsteq}
\begin{gather*}
{\rm d}\omega ^{2}=-{\omega _{1}}^{2}\wedge \omega ^{1}=\hat{\omega}^{1}\wedge {\omega _{1}}^{2}.
\end{gather*}
Combine two identities above and use the Cartan lemma, we see that there exists a function $D$ such that
\begin{gather}
\omega _{1}{}^{2}
 =\frac{e_{1}\alpha }{(1+\alpha ^{2})^{\frac{3}{2}}}\hat{\omega}^{2}+\frac{\alpha }{(1+\alpha ^{2})^{\frac{1}{2}}}\hat{\omega}_{1}{}^{2}+D\hat{\omega}^{1}.
\label{baeq2}
\end{gather}
Similarly,
\begin{gather*}
-\hat{\omega}_{2}{}^{1}\wedge \hat{\omega}^{2}={\rm d}\hat{\omega}^{1}={\rm d}\omega ^{1} ={\omega _{1}}^{2}\wedge \omega ^{2}=\frac{\alpha }{\sqrt{1+\alpha ^{2}}}{\omega _{1}}^{2}\wedge \hat{\omega}^{2}.
\end{gather*}
Again, by Cartan lemma, there exists a function $A$ such that
\begin{gather}\label{baeq3}
-\hat{\omega}_{2}{}^{1}=\frac{\alpha }{\sqrt{1+\alpha ^{2}}}\omega _{1}{}^{2}+A\hat{\omega}^{2}.
\end{gather}
Finally, use \eqref{risteq} again
\begin{gather*}
-\hat{\omega}_{1}{}^{2}\wedge \hat{\omega}^{1} ={\rm d}\hat{\omega}^{2}={\rm d}\big(\big(1+\alpha ^{2}\big)^{\frac{1}{2}}\omega ^{3}\big)=\big(1+\alpha ^{2}\big)^{\frac{1}{2}}{\rm d}\omega ^{3}+{\rm d}\big(1+\alpha ^{2}\big)^{\frac{1}{2}}\wedge \omega ^{3} \\
\hphantom{-\hat{\omega}_{1}{}^{2}\wedge \hat{\omega}^{1}}{} =2\alpha \big(1+\alpha ^{2}\big)^{\frac{1}{2}}\hat{\omega}^{1}\wedge \omega ^{3}+\frac{\alpha }{(1+\alpha ^{2})^{\frac{1}{2}}}{\rm d}\alpha \wedge \omega ^{3}=\left( 2\alpha +\frac{\alpha (e_{1}\alpha )}{1+\alpha ^{2}}\right) \hat{\omega}^{1}\wedge \hat{\omega}^{2},
\end{gather*}
where we have used the third formula of \eqref{mcsteq} and $\hat{\omega}%
^{2}\wedge \omega ^{3}=0$. Therefore, there exists a~func\-tion~$B$ such that
\begin{gather}\label{baeq4}
\hat{\omega}_{1}{}^{2}=\left( 2\alpha +\frac{\alpha (e_{1}\alpha )}{1+\alpha^{2}}\right) \hat{\omega}^{2}+B\hat{\omega}^{1}.
\end{gather}
By \eqref{baeq2}, \eqref{baeq3}, we get
\begin{gather*}
D=\omega _{1}{}^{2}(e_{1})-\frac{\alpha }{\sqrt{1+\alpha ^{2}}}\hat{\omega}_{1}{}^{2}(e_{1})=\frac{\omega_{1}{}^{2}(e_{1})}{1+\alpha ^{2}}=\frac{l}{1+\alpha ^{2}}.
\end{gather*}
Similarly, by \eqref{baeq2}, \eqref{baeq3}, \eqref{baeq4}, we obtain
\begin{gather*}
A =\frac{2\alpha }{1+\alpha ^{2}}, \qquad B =\frac{l\alpha }{\sqrt{1+\alpha ^{2}}}.
\end{gather*}
These complete the proof.
\end{proof}

\section{The derivation of the integrability condition (\ref{Intconsur})}\label{section6}
\begin{proof}
Proposition~\ref{prop1} implies that
\begin{gather*}
0={\rm d}\omega_{1}{}^{2} ={\rm d}\left(\frac{\alpha}{\sqrt{1+\alpha^{2}}}\hat{\omega}_{1}{}^{2}+\frac{l}{%
1+\alpha^{2}}\hat{\omega}^{1}+ \frac{e_{1}\alpha}{(1+\alpha^2)^{\frac{3}{2}}}%
\hat{\omega}^2\right) \\
\hphantom{0}{} =\big \{{-}\big(1+\alpha^{2}\big)^{\frac{3}{2}}(e_{\Sigma}l)+\big(1+\alpha^{2}\big)(e_{1}e_{1}\alpha)-\alpha(e_{1}\alpha)^{2} +4\alpha\big(1+\alpha^{2}\big)(e_{1}\alpha) \\
\hphantom{0=}{} +\alpha\big(1+\alpha^{2}\big)^{2}K+\alpha l\big(1+\alpha^{2}\big)^{\frac{1}{2}
}(e_{\Sigma}\alpha)+\alpha\big(1+\alpha^{2}\big)l^{2}\big \} \frac{\hat{\omega}
^{1}\wedge \hat{\omega}^{2}}{(1+\alpha^2)^{\frac{5}{2}}}.
\end{gather*}
Therefore the integrability condition \eqref{Intconsur} is equivalent to ${\rm d}\omega_{1}{}^{2}=0$.
\end{proof}

\section{The proof of Theorem \ref{main8}}\label{section7}
\begin{proof}
First we show existence. Def\/ine a ${\mathfrak{psh}}(1)$-valued 1-form $\phi$ on the non-singular part of $\Sigma$ by
\begin{gather*}
\phi=\left(
\begin{matrix}
0 & 0 & 0 & 0 \\
\hat{\omega}^{1} & 0 & -\omega_{1}{}^{2} & 0 \\
\frac{\alpha^{\prime}}{\sqrt{1+(\alpha^{\prime})^{2}}}\hat{\omega}%
^{2} & \omega_{1}{}^{2} & 0 & 0 \\
\frac{1}{\sqrt{1+(\alpha^{\prime})^{2}}}\hat{\omega}^{2} & \frac{%
\alpha^{\prime}}{\sqrt{1+(\alpha^{\prime})^{2}}}\hat{\omega}^{2} & -%
\hat{\omega}^{1} & 0%
\end{matrix}
\right),
\end{gather*}
where
\begin{gather*}
\omega_{1}{}^{2}=\frac{\alpha^{\prime}}{\sqrt{1+(\alpha^{\prime})^{2}}}\hat{\omega}_{1}{}^{2}+\frac{l^{\prime}}{1+(\alpha^{\prime})^{2}}\hat{%
\omega}^{1}+ \frac{e_{1}\alpha^{\prime}}{(1+(\alpha^{\prime})^{2})^{\frac{3}{2}}}\hat{\omega}^2.
\end{gather*}
It is easy to check that $\phi$ satisf\/ies ${\rm d}\phi+\phi \wedge \phi=0$ if and only if the integrability condition \eqref{Intconsur} holds. Therefore,
by Theorem~\ref{ft2}, for any point $p\in \Sigma$, there exists an open set~$U$ containing~$p$ and an embedding $f\colon U\rightarrow H_1$ such that $g=f^{*}(g_{\Theta})$, $\alpha^{\prime}=f^{*}\alpha$ and $l^{\prime}=f^{*}l$.

For uniqueness, by Proposition~\ref{prop1}, the Darboux derivative is completely determined by the induced metric $%
g_{\Theta}|_{\Sigma}$, the $p$-variation $\alpha$ and the $p$-mean
curvature $l$. Therefore, by Theorem~\ref{ft1}, the embedding into $H_1$ is
unique up to a Heisenberg rigid motion.
\end{proof}

\section{Application: the Crofton formula}

Since the singular set of a $C^2$-surface in $H_1$ consists of only isolated points or singular curves \cite[Theorem B]{CHMY2}, and the integral of the intersections of horizontal lines and the surface over the singular set has zero measure, we may assume that $\Sigma$ is a $C^2$-surface without singular points throughout this section.

\begin{Definition}An \textit{oriented horizontal line} $\ell$ in $H_1$ is an oriented line such that any point $p\in \ell$ the tangent vector of the line at $p$ lies on the contact plane $\xi_p$. For convenience we sometimes call a \textit{horizontal line} or a \textit{line}. Denote $\mathcal{L}$ by the set of all oriented horizontal lines in~$H_1$.
\end{Definition}

\begin{prop}\label{linerepre}
Any horizontal line $\ell \in \mathcal{L}$ can be parametrized by a triple $(p,\theta, t)\in \mathbb{R} \times S^1\times\mathbb{R}$, and also be parametrized by a base point $B=(p\cos\theta, p\sin\theta, t)$ with a horizontally unit-speed vector $U=(\sin\theta, -\cos\theta, p) $, namely,
\begin{gather}\label{parametrizedline}
\ell(s)\colon \ (p\cos\theta, p\sin\theta, t)+ s(\sin\theta,- \cos\theta, p), \qquad \forall \, s\in\mathbb{R}.
\end{gather}
\end{prop}

\begin{proof}
Consider the projection $\pi(\ell)$ of the line $l\in\mathcal{L}$ onto the $xy$-plane. Since $\pi(\ell)$ can be uniquely determined by the pair $(p,\theta)$, where $p\in\mathbb{R}$ is the oriented distance from the origin to the line $\pi(l)$ (see \cite{Ch} or the remark below) and $\theta\in [0,2\pi)$ is the angle from the positive $x$-axis to the normal (Fig.~\ref{fig:1.1}), the points $(x,y)\in\pi(\ell)$ satisfy the equation
\begin{gather}\label{projformula}
x\cos\theta + y\sin\theta = p.
\end{gather}

\begin{figure}[ht] \centering
 \includegraphics[scale=0.6]{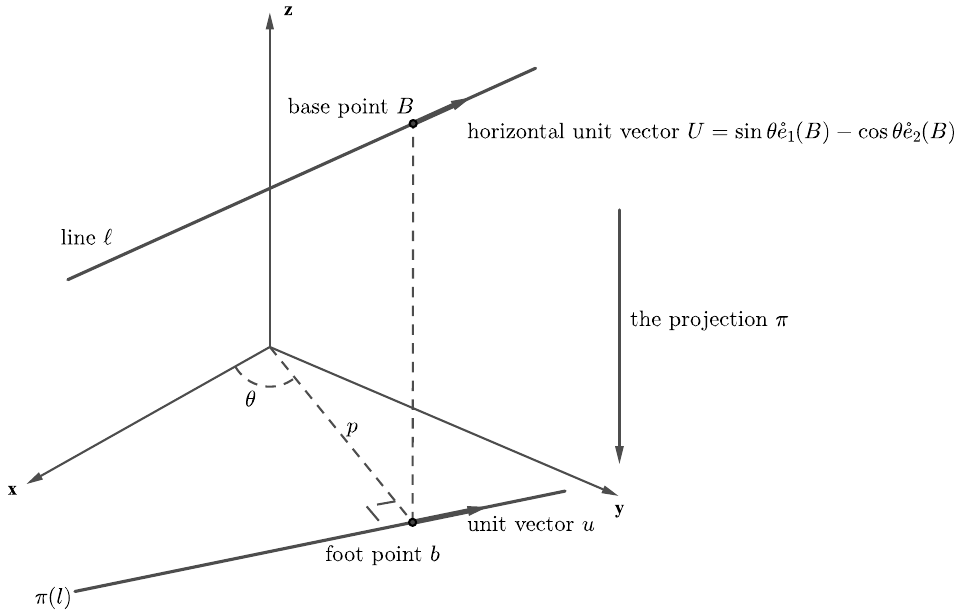}
\caption{} \label{fig:1.1}
\end{figure}

On the projection $\pi(\ell)$, denote the foot point
\begin{gather*}
b=(p\cos\theta, p\sin\theta),
\end{gather*}
and the unit tangent vector along the projection
\begin{gather}\label{xyunit}
u=(\sin\theta, -\cos\theta), \qquad |u|_{\mathbb{R}^2}=1,
\end{gather}
where $|u|_{\mathbb{R}^2}$ is the Euclidean length of $u$ on the $xy$-plane; on the line $\ell\in H_1$, denote the lifting of the foot point $b$, called the \textit{base point} by
\begin{gather*}
B=(p\cos\theta, p\sin\theta,t) \qquad \text{for some} \quad t\in \mathbb{R}.
\end{gather*}
Denote the tangent vector of $\ell$ at point $B$ by $T(B)$. Since $\ell$ is horizontal, which implies that $T(B)\in\xi_B:=\operatorname{span}\{\mathring{e}_1(B), \mathring{e}_2(B)\}$, and we have
\begin{gather}
T(B)=\alpha \mathring{e}_1(B)+\beta\mathring{e}_2(B)=\alpha(1,0,p\sin\theta)+\beta(0,1,-p\cos\theta) \nonumber \\
\hphantom{T(B)}{} =(\alpha, \beta, \alpha p\sin\theta-\beta p\cos\theta) \label{horizontalunit}
\end{gather}for some $\alpha, \beta\in \mathbb{R}$.
Notice that the projection $\pi(T(B))$ is exactly the unit tangent vector $u$ along the projection $\pi(\ell)$. Hence by comparing the f\/irst two components of \eqref{horizontalunit} with \eqref{xyunit} we have
\begin{gather*}
\alpha = \sin\theta, \qquad \beta = -\cos\theta, \qquad \text{and} \qquad T(B)=(\sin\theta,-\cos\theta, p).
\end{gather*}
Therefore by def\/ining the horizontal vector
\begin{gather*}
U:=T(B)=\sin\theta \mathring{e}_1(B)-\cos\theta\mathring{e}_2(B),
\end{gather*}we have $|U|_{\xi(B)}=1$, the horizontally unit-speed, and conclude that the line $\ell$ can be uniquely determined by the triple $(p,\theta, t)$, i.e., the base point $B$, and be parametrized by $B+sU$ for any $s\in \mathbb{R}$ as shown in~\eqref{parametrizedline}.
\end{proof}

\begin{Remark}
We point out that the lines we consider in $H_1$ are all \textit{oriented} lines. Indeed, by convention in $\mathbb{R}^2$ there exists a bijection between the set of oriented lines and $\mathbb{R}^2\times S^1$, and the orientation of $\ell\in H_1$ follows that of $\pi(\ell)\in \mathbb{R}^2$. If we consider the non-oriented lines in $\mathbb{R}^2$ (and hence in $H_1$), then the coef\/f\/icient on the right-hand side of \eqref{croftonidentity} should be changed to $2$.
\end{Remark}

Next, we consider the intersections of lines and a f\/ixed surface
\begin{gather*}
\mathbb{X}\colon \ (u,v)\in\Omega\rightarrow (x(u,v),y(u,v),z(u,v))\in\Sigma
\end{gather*}
embedded in $H_1$ for some domain $\Omega\subset \mathbb{R}^2$. To describe the position of the intersection in~$\mathbb{R}^3$, one needs exact three variables. We have already known, by Proposition~\ref{linerepre}, a line can be represented by a triple $(p,\theta,t)$. Hence if we regard lines and surfaces as a whole system (the conf\/iguration space) and use f\/ive variables $\{(p,\theta, t, u, v )\}$ to describe the behavior of the intersections, two additional constraints are necessarily required to make the number of the freedoms be three. Those constraints can be obtained from the following proposition.

\begin{prop}Let $\mathbb{X}(u,v)=(x(u,v),y(u,v),z(u,v))\in \Sigma$ be the parametrized surface in~$H_1$. Then the conf\/iguration space $D$ which describes the horizonal oriented lines intersecting $\Sigma$ should be
\begin{gather*}
D:=\{(p,\theta, t, u, v)\in \mathbb{R}\times S^1\times \mathbb{R} \times \Omega\\
\hphantom{D:=\{}{} |\,\text{the lines } (p,\theta, t)\in \mathcal{L} \text{ pass through the point }\mathbb{X}(u,v)\text{ on } \Sigma \}\\
\hphantom{D}{} =\{(p,\theta, t, u, v)\in \mathbb{R}\times S^1\times \mathbb{R} \times \Omega
\, |\, \text{the variables } p,\,\theta, \,t,\, u, \, v \text{ satisfy } \eqref{constraint1} \text{ and } \eqref{constraint2}\},
\end{gather*}where
\begin{gather}
x(u,v)\cos\theta+y(u,v)\sin\theta=p, \label{constraint1}\\
z(u,v)=t+(x(u,v)\sin\theta-y(u,v)\cos\theta)p. \label{constraint2}
\end{gather}
\end{prop}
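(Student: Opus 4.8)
The plan is to unwind the definition of $D$: a horizontal oriented line $\ell$ with coordinates $(p,\theta,t)$ passes through the surface point $\mathbb{X}(u,v)$ precisely when there is a parameter value $s\in\R$ with $\ell(s)=\mathbb{X}(u,v)$, and then to eliminate $s$. Using the parametrization $\ell(s)=(p\cos\theta,p\sin\theta,t)+s(\sin\theta,-\cos\theta,p)$ from Proposition \ref{linerepre}, this condition reads
\begin{equation*}
\begin{split}
x(u,v)&=p\cos\theta+s\sin\theta,\\
y(u,v)&=p\sin\theta-s\cos\theta,\\
z(u,v)&=t+sp.
\end{split}
\end{equation*}
The first task is to treat the first two equations as a linear $2\times 2$ system in the unknowns $p$ and $s$; its coefficient matrix $\left(\begin{smallmatrix}\cos\theta&\sin\theta\\ \sin\theta&-\cos\theta\end{smallmatrix}\right)$ is orthogonal with determinant $-1$, so the system is uniquely solvable. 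Taking the combination $\cos\theta\cdot(\text{first})+\sin\theta\cdot(\text{second})$ yields exactly (\ref{constraint1}), while $\sin\theta\cdot(\text{first})-\cos\theta\cdot(\text{second})$ gives $s=x(u,v)\sin\theta-y(u,v)\cos\theta$. Substituting this value of $s$ into the third equation produces (\ref{constraint2}). Thus membership of $(p,\theta,t,u,v)$ in $D$ forces the two stated constraints.

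For the converse I would simply run the computation backwards. Given $(p,\theta,t,u,v)$ satisfying (\ref{constraint1}) and (\ref{constraint2}), set $s:=x(u,v)\sin\theta-y(u,v)\cos\theta$ and verify, using $\cos^{2}\theta+\sin^{2}\theta=1$ together with (\ref{constraint1}), that $p\cos\theta+s\sin\theta=x(u,v)$ and $p\sin\theta-s\cos\theta=y(u,v)$; equation (\ref{constraint2}) then gives $t+sp=z(u,v)$. Hence $\ell(s)=\mathbb{X}(u,v)$, so the line $(p,\theta,t)\in\mathcal{L}$ meets $\Sigma$ at the parameter $(u,v)$. Combining the two implications yields the claimed description of $D$.

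There is no serious obstacle here: the statement is essentially a bookkeeping consequence of Proposition \ref{linerepre}, and the points requiring mild care are (i) recording \emph{both} implications — in the converse direction one must exhibit the actual intersection parameter $s$, not merely observe that the equations are mutually compatible — and (ii) invoking the orthogonality of the rotation block to justify the clean elimination of $s$. It is also worth noting, perhaps as a remark, that (\ref{constraint1}) is nothing but equation (\ref{projformula}) evaluated at $(x(u,v),y(u,v))$, i.e. the condition that the projection $\pi(\ell)$ passes through the projected surface point; the genuinely new information contained in lying on the horizontal line itself is therefore carried entirely by the single equation (\ref{constraint2}).
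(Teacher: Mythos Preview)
Your proof is correct and follows essentially the same approach as the paper: write the three parametric equations $\ell(s)=\mathbb{X}(u,v)$ from Proposition \ref{linerepre} and eliminate $s$ to obtain (\ref{constraint1}) and (\ref{constraint2}). Your version is in fact more complete, since you also supply the converse implication (exhibiting the intersection parameter $s$), which the paper's proof leaves implicit.
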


\begin{proof}
Suppose the line $\ell(s)$ parametrized by \eqref{parametrizedline} intersects the surface $\Sigma$ at the point $q$. 
\begin{figure}[ht]
 \centering
 \includegraphics[scale=0.6]{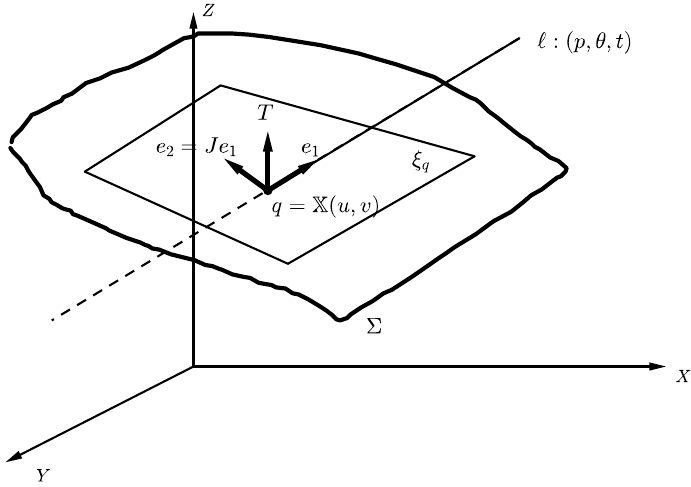}
\caption{} \label{fig:1.2}
\end{figure}

At the point $q$, by Proposition~\ref{linerepre}, we have
\begin{gather}
x(u,v)= p\cos\theta + s\cdot \sin\theta, \label{x(uv)}\\
y(u,v)= p\sin\theta - s\cdot \cos\theta, \label{y(uv)}\\
z(u,v)= t +s\cdot p, \label{z(uv)}
\end{gather} for some $s\in \mathbb{R}$.
By \eqref{x(uv)}, \eqref{y(uv)}, one has
\begin{gather*}
 x(u,v)\cos\theta+y(u,v)\sin\theta=p,
\end{gather*}
which is compatible with $\eqref{projformula}$ and we obtain the f\/irst constraint~\eqref{constraint1}. Finally, use~\eqref{x(uv)},~\eqref{y(uv)} again to solve for the parameter $s$, and substitute $s$ into $\eqref{z(uv)}$. It is easy to have the second constraint~\eqref{constraint2}
\end{proof}

\begin{Remark}\label{shift}
By a simple calculation and \eqref{constraint1}, we observe that
\begin{gather*}
U(B)=\sin\theta\mathring{e}_1(B)-\cos\theta\mathring{e}_2(B)
= \sin\theta\mathring{e}_1(\mathbb{X}(u,v))-\cos\theta\mathring{e}_2(\mathbb{X}(u,v))=U(\mathbb{X}(u,v)),
\end{gather*} i.e., the horizontally unit-speed vector f\/ield $U$ along the line have the same vector-value wherever being evaluated at the based point $B$ or at the intersection $q=\mathbb{X}(u,v)$.
\end{Remark}

Actually, the coordinates $(u,v)$ determine where the intersections should be located on the surface, and the angle $\theta$ decides how those lines penetrate through the surface. Thus, instead of using $(p,\theta,t)$ as the coordinates for the conf\/iguration space, we can also take the triple $\{(u,v,\theta)\in\Omega\times S^1\}$ as the coordinates. Since the intersection $q$ is not only on the line but on the surface, we can derive the change of the coordinates for those coordinates.

By Remark~\ref{shift} we choose the frame
 $\{\mathbb{X}(u,v);e_1(\theta),e_2(\theta),T \}$ on $D$ where (Fig.~\ref{fig:1.2})
\begin{gather}\label{frames}
e_1 := \sin\theta \mathring{e}_1-\cos\theta\mathring{e}_2, \qquad
e_2 := J e_1= \cos\theta \mathring{e}_1+\sin\theta\mathring{e}_2, \qquad
T:=(0,0,1),
\end{gather}
and denote the corresponding coframe $\{\mathbb{X}(u,v); \omega^1, \omega^2, \Theta\}$ with the connection $1$-form~${\omega_1}^2$. The following formula connects the coordinates $(p,\theta, t)$ of the line and the coframe.

\begin{prop}\label{changofcoordinates1}
Let $\{\mathbb{X}(u,v);e_1(\theta),e_2(\theta),T \}$ be a frame defined by~\eqref{frames} and the correspon\-ding coframe $\{\mathbb{X}(u,v); \omega^1, \omega^2, \Theta\}$ with the connection $1$-form~${\omega_1}^2$. We have
\begin{gather*}
\omega^2={\rm d}p+\langle \mathbb{X},e_1\rangle {\rm d}\theta,\qquad
{\omega_1}^2 ={\rm d}\theta, \qquad
\Theta ={\rm d}t, \quad \text{\rm mod} \ {\rm d}\theta, {\rm d}p.
\end{gather*}
One concludes that
\begin{gather}
\omega^2\wedge {\omega_1}^2 = {\rm d}p\wedge {\rm d}\theta, \qquad
\omega^2\wedge {\omega_1}^2 \wedge \Theta= {\rm d}p\wedge {\rm d}\theta\wedge {\rm d}t=\pi^*{\rm d}L, \label{otop}
\end{gather}where $\pi$ is the projection from $D$ to $\mathcal{L}$, and $\langle \,,\,\rangle $ is the Levi-metric.
\end{prop}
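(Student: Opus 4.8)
The plan is to compute everything directly in the coordinates $(u,v,\theta)$ on the configuration space $D$, regarding $p$ and $t$ as the functions of $(u,v,\theta)$ prescribed by the constraints (\ref{constraint1}) and (\ref{constraint2}). The key preliminary observation is that the frame $\{e_1(\theta),e_2(\theta),T\}$ of (\ref{frames}) is just the standard left‑invariant frame $\{\mathring e_1,\mathring e_2,T\}$ rotated inside the contact plane through the angle $\theta-\tfrac{\pi}{2}$, so that its dual coframe is
\begin{equation*}
\omega^{1}=\sin\theta\,dx-\cos\theta\,dy,\qquad
\omega^{2}=\cos\theta\,dx+\sin\theta\,dy,\qquad
\Theta=dz+x\,dy-y\,dx,
\end{equation*}
where $x,y,z$ denote the components of $\mathbb X(u,v)$ (so $dx=x_u\,du+x_v\,dv$, etc.) and $\theta$ enters the coefficients only as a parameter. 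I will abbreviate $w:=x\sin\theta-y\cos\theta$, so that (\ref{constraint1}) reads $p=x\cos\theta+y\sin\theta$ and $w=\langle\mathbb X,e_1\rangle$.

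Next I would establish the three displayed identities in turn. Differentiating (\ref{constraint1}) gives $dp=\cos\theta\,dx+\sin\theta\,dy-(x\sin\theta-y\cos\theta)\,d\theta=\omega^{2}-w\,d\theta$, which is exactly $\omega^{2}=dp+\langle\mathbb X,e_1\rangle\,d\theta$. For the connection form I differentiate $e_1(\theta)=\sin\theta\,\mathring e_1-\cos\theta\,\mathring e_2$ along $D$: since $\mathring e_1=\partial_x+y\,\partial_z$ and $\mathring e_2=\partial_y-x\,\partial_z$ one has $d(\mathring e_1(\mathbb X))=dy\cdot T$ and $d(\mathring e_2(\mathbb X))=-dx\cdot T$, hence
\begin{equation*}
de_1=(\cos\theta\,\mathring e_1+\sin\theta\,\mathring e_2)\,d\theta+(\cos\theta\,dx+\sin\theta\,dy)\,T=e_2\,d\theta+\omega^{2}\,T.
\end{equation*}
Comparing with (\ref{movingframe}), which gives $de_1=\omega_1{}^{2}e_2+\omega^{2}T$, we read off $\omega^{2}_{1}=d\theta$. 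Finally, writing (\ref{constraint2}) as $z=t+wp$ gives $dz=dt+w\,dp+p\,dw$ with $dw=\sin\theta\,dx-\cos\theta\,dy+p\,d\theta=\omega^{1}+p\,d\theta$; inverting the coframe ($dx=\sin\theta\,\omega^{1}+\cos\theta\,\omega^{2}$, $dy=-\cos\theta\,\omega^{1}+\sin\theta\,\omega^{2}$) yields $x\,dy-y\,dx=-p\,\omega^{1}+w\,\omega^{2}$, and substituting these together with $\omega^{2}=dp+w\,d\theta$ to eliminate $\omega^{1},\omega^{2}$ gives
\begin{equation*}
\Theta=dt+2w\,dp+(p^{2}+w^{2})\,d\theta\equiv dt \pmod{dp,\, d\theta}.
\end{equation*}

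The two exterior‑product statements are then immediate: $\omega^{2}\wedge\omega^{2}_{1}=(dp+w\,d\theta)\wedge d\theta=dp\wedge d\theta$, and wedging once more with $\Theta$ annihilates the surviving $dp$‑ and $d\theta$‑terms, so $\omega^{2}\wedge\omega_{1}^{2}\wedge\Theta=dp\wedge d\theta\wedge dt=\pi^{*}dL$, since $\pi\colon D\to\mathcal L$ sends $(u,v,\theta)$ to the line with coordinates $(p,\theta,t)$. The only point demanding care is the bookkeeping: after restriction to $D$ the ``point'' forms $\omega^{1},\omega^{2},\Theta$ carry no $d\theta$ of their own, but $p$, $t$ and $w$ are genuine functions of $(u,v,\theta)$, so $dp$, $dt$ and $dw$ do, and these $d\theta$‑contributions must be tracked accurately through the structure relations; once that is organized the computation is routine and uses nothing beyond (\ref{movingframe}) and the rotation identifying the two frames.
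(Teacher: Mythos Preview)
Your proposal is correct and follows essentially the same approach as the paper: direct computation of $\omega^2$, $\omega_1^2$, and $\Theta$ from the constraints (\ref{constraint1}), (\ref{constraint2}) and the moving frame formula (\ref{movingframe}). The only cosmetic differences are that the paper computes $\omega_1^2$ via $-\langle de_2,e_1\rangle$ rather than $\langle de_1,e_2\rangle$, and handles $\Theta$ by invoking the line parameter $s$ from (\ref{parameterizedline}) to work modulo $dp,d\theta$ from the start, whereas you derive the full expression $\Theta=dt+2w\,dp+(p^2+w^2)\,d\theta$ before reducing; your route is in fact slightly more explicit.
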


\begin{proof} On the surface since $\mathbb{X}=(x,y,z)=x(1,0,y)+y(0,1,-x)+(0,0,z)=x\mathring{e}_1+y\mathring{e}_2+ z T$, we have
\begin{gather*}
\langle \mathbb{X},e_1\rangle =\langle x\mathring{e}_1+y\mathring{e}_2+ z T, \sin\theta\mathring{e}_1-\cos\theta\mathring{e}_2\rangle =x\sin\theta-y\cos\theta.
\end{gather*}
Thus, by the moving frame formula \eqref{movingframe} and the f\/irst constraint \eqref{constraint1}
\begin{gather*}
\omega^2 =\langle {\rm d}\mathbb{X}, e_2\rangle =\left\langle {\rm d}x\, \mathring{e}_1+{\rm d}y\, \mathring{e}_2+\Theta \frac{\partial}{\partial z},e_2 \right\rangle=\cos\theta {\rm d}x +\sin\theta {\rm d}y \\
\hphantom{\omega^2}{} ={\rm d}p+ (x \sin\theta-y \cos\theta){\rm d}\theta={\rm d}p+\langle \mathbb{X},e_1\rangle {\rm d}\theta;\\
{\omega_1}^2=-{\omega_2}^1= -\langle {\rm d}e_2, e_1\rangle =\langle \sin\theta {\rm d}\theta \, \mathring{e}_1+\cos\theta {\rm d}\theta \, \mathring{e}_2, \sin\theta\mathring{e}_1+\cos\theta\mathring{e}_2\rangle \\
\hphantom{{\omega_1}^2}{} =\sin^2\theta\, {\rm d}\theta + \cos^2\theta \, {\rm d}\theta ={\rm d}\theta.
\end{gather*}
By the second constraint \eqref{constraint2} and the parametrization of the line \eqref{parametrizedline}
\begin{gather*}
\Theta={\rm d}z+x{\rm d}y-y{\rm d}x =\big({\rm d}t+(x \sin\theta-y\cos\theta){\rm d}p+p{\rm d}(x \sin\theta-y \cos\theta)\big)+x{\rm d}y-y{\rm d}x\\
\hphantom{\Theta}{} ={\rm d}t+(p\sin\theta-y){\rm d}x-(p\ \cos\theta-x){\rm d}y, \quad \text{mod} \ {\rm d}\theta, {\rm d}p \\
\hphantom{\Theta}{}={\rm d}t+s( \cos\theta {\rm d}x + \sin\theta {\rm d}y), \quad \text{mod} \ {\rm d}\theta, {\rm d}p,\quad \text{for some} \ s\in \mathbb{R}\\
\hphantom{\Theta}{} ={\rm d}t, \quad \text{mod} \ {\rm d}\theta, {\rm d}p,
\end{gather*}
and the result follows.
\end{proof}

The next lemma characterizes the $1$-dimension foliation.
\begin{lem}\label{lemmaiff}
Let $E=\alpha\mathbb{X}_u + \beta\mathbb{X}_v$ be the tangent vector field defined on the surface $\sum=\mathbb{X}(u,v)$. Then the vector $E$ is on the contact bundle $\xi$ $($and hence in $TH_1\cap\xi)$ if and only if pointwisely the coefficients $\alpha$ and $\beta$ satisfy
\begin{gather}\label{belonginboth1}
\alpha t_u+\beta t_v+x(\alpha y_u+\beta y_v)-y(\alpha x_u+\beta x_v)=0 ,
\end{gather}
equivalently,
\begin{gather}\label{belonginboth2}
\alpha(t_u+xy_u-y x_u)+ \beta(t_v+xy_v-y x_v) =0.
\end{gather}
\end{lem}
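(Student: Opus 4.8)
The plan is to unwind the definition of the contact bundle. Recall from the introduction that $\xi=\ker\Theta$ on all of $H_1$, where $\Theta=dz+x\,dy-y\,dx$. Since $E=\alpha\mathbb{X}_u+\beta\mathbb{X}_v$ is by construction already tangent to $\Sigma$, hence to $H_1$, the condition ``$E$ lies in $TH_1\cap\xi$'' is equivalent to the single scalar equation $\Theta(E)=0$; there is nothing else to check. So the whole lemma amounts to expanding $\Theta(E)=0$ in coordinates.

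Next I would compute $\Theta(E)$ in the standard coordinates of $\R^3$. Writing $\mathbb{X}_u=x_u\frac{\partial}{\partial x}+y_u\frac{\partial}{\partial y}+z_u\frac{\partial}{\partial z}$ (with the third coordinate function denoted $t$ in the statement, so $t_u$ plays the role of $z_u$), and similarly for $\mathbb{X}_v$, and evaluating $\Theta$ at the point $\mathbb{X}(u,v)=(x,y,z)$, where $\Theta(\frac{\partial}{\partial x})=-y$, $\Theta(\frac{\partial}{\partial y})=x$, $\Theta(\frac{\partial}{\partial z})=1$, we get $\Theta(\mathbb{X}_u)=t_u+xy_u-yx_u$ and $\Theta(\mathbb{X}_v)=t_v+xy_v-yx_v$. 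By linearity $\Theta(E)=\alpha(t_u+xy_u-yx_u)+\beta(t_v+xy_v-yx_v)$, which is precisely the second displayed identity; distributing the products over $\alpha$ and $\beta$ and collecting the $x$- and $y$-terms gives the first displayed identity. Setting $\Theta(E)=0$ therefore establishes both directions of the equivalence at once.

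There is no real obstacle here: the statement is a direct translation of ``$E\in\ker\Theta$'' into coordinates, and the two displayed equations are the same equation with different groupings of terms. The only point worth stating with care is the first one, namely that for a vector field already tangent to $\Sigma$, membership in the characteristic distribution $T\Sigma\cap\xi$ is cut out by the single linear equation $\Theta(E)=0$, because $\xi$ is defined as $\ker\Theta$ throughout $H_1$ rather than merely along $\Sigma$.
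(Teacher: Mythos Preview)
Your proof is correct and in fact cleaner than the paper's. The paper does not use the contact form $\Theta$ at all; instead it argues in the frame $\mathring{e}_1,\mathring{e}_2,T$: it writes $E=\alpha\mathbb{X}_u+\beta\mathbb{X}_v=(c,d,cy-dx)$ for unknown $c,d$, matches this against the component expression $(\alpha x_u+\beta x_v,\alpha y_u+\beta y_v,\alpha z_u+\beta z_v)$, solves for $c,d$ from the first two components, and substitutes into the third to obtain the necessary condition; the converse is a separate direct computation showing that under the condition the $T$-component of $E$ vanishes. Your route---observe $\xi=\ker\Theta$ and evaluate $\Theta(E)$---collapses both directions into a single line and makes the equivalence of the two displayed equations automatic. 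The paper's approach buys nothing extra here; it is simply a coordinate-by-coordinate version of the same linear condition you obtain by pairing with $\Theta$.
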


\begin{proof}First, we assume that $E=\alpha\mathbb{X}_u + \beta\mathbb{X}_v=\alpha(x_u, y_u,z_u)+\beta(x_v, y_v,z_v)=c\mathring{e}_1+d\mathring{e}_2=(c,d,cy-dx)$ for some constants $c$ and $d$. Compare each component of $E$ to have
\begin{gather*}
\alpha x_u+\beta x_v= c, \qquad \alpha y_u+\beta y_v= d, \qquad \alpha z_u+\beta z_v= cy-dx.
\end{gather*}
Substitute the last equation by the f\/irst two, we get the necessary condition $\alpha z_u+\beta z_v= (\alpha x_u+\beta x_v)y - (\alpha y_u+\beta y_v)x$.

The reverse part can be obtained by the direct computation
\begin{gather*}
E=(\alpha x_u+\beta x_v, \alpha y_u+\beta y_v, \alpha z_u+\beta z_v) =(\alpha x_u+\beta x_v)(1,0,y)+(\alpha y_u+\beta y_v)(0,1,-x) \\
\hphantom{E=}{} + \big(0,0,\alpha z_u+\beta z_v -y(\alpha x_u+\beta x_v)+x(\alpha y_u+\beta y_v)\big)\\
\hphantom{E}{}=(\alpha x_u+\beta x_v)\mathring{e}_1+(\alpha y_u+\beta y_v)\mathring{e}_2.
\end{gather*}
We have used the condition \eqref{belonginboth1} in the last equality.
\end{proof}

Next we show a formula for the change of coordinates between the coframe and the coordinates of the surface.
\begin{prop}\label{changofcoordinates2}
Suppose we choose the frames $\{\mathbb{X}(u,v);e_1(\theta),e_2(\theta),T \}$ on $D$ and the coframe with the connection $1$-form defined by \eqref{frames}. We have the identity
\begin{gather}\label{otou}
\Theta\wedge \omega^2\wedge {\omega_1}^2 =\langle E,e_2\rangle {\rm d}u\wedge {\rm d}v \wedge {\rm d}\theta,
\end{gather} where the singular foliation
\begin{gather*}
E:=(z_u+xy_u-yx_u)\mathbb{X}_v-(z_v+xy_v-yx_v)\mathbb{X}_u
\end{gather*} defines the characteristic foliation of $\Sigma$, which is induced from the contact plane $\xi$.
\end{prop}

\begin{proof}
By Proposition~\ref{changofcoordinates1} and the moving frame formula \eqref{movingframe}
\begin{gather*}
\Theta\wedge \omega^2\wedge {\omega_1}^2=({\rm d}z+x{\rm d}y-y{\rm d}x)\wedge\langle {\rm d}\mathbb{X},e_2\rangle \wedge {\rm d}\theta\\
\hphantom{\Theta\wedge \omega^2\wedge {\omega_1}^2}{}
=\big( (z_u+xy_u-yx_u){\rm d}u +(z_v+xy_v-yx_v){\rm d}v\big)\\
\hphantom{\Theta\wedge \omega^2\wedge {\omega_1}^2=}{}
\wedge \big(\langle \mathbb{X}_u, e_2\rangle {\rm d}u\wedge {\rm d}\theta+\langle \mathbb{X}_v,e_2\rangle {\rm d}v\wedge {\rm d}\theta\big) \\
\hphantom{\Theta\wedge \omega^2\wedge {\omega_1}^2}{}
=\langle (z_u+xy_u-yx_u)\mathbb{X}_v-(z_v+xy_v-yx_v)\mathbb{X}_u,e_2\rangle {\rm d}u\wedge {\rm d}v \wedge {\rm d}\theta\\
\hphantom{\Theta\wedge \omega^2\wedge {\omega_1}^2}{}
=\langle E,e_2\rangle {\rm d}u\wedge {\rm d}v \wedge {\rm d}\theta.
\end{gather*}

To prove the vector $E\in TM\cap\xi$, it suf\/f\/ices to show that the coef\/f\/icients $\alpha:=(z_u+xy_u-yx_u)$ and $\beta:= -(z_v+xy_v-yx_v)$ satisfy the condition \eqref{belonginboth2}, and we complete the proof by the previous Lemma~\ref{lemmaiff}.
\end{proof}

\begin{Remark}\label{orientationrem}
In classical integral geometry \cite{Ch, San}, the quantity ${\rm d}L:= {\rm d}p\wedge {\rm d}\theta\wedge {\rm d}t$ is called the \textit{$($kinematic$)$ density} of the line $(p,\theta,t)\in\mathbb{R}^3$, which is always chosen to be positive depending the orientation. Hence, according to~\eqref{otop} and~\eqref{otou}, in the following proof we have to consider the orientation of~$\{(u,v,\theta)\}$ to ensure the positivity of the quantity $\langle E,e_2\rangle $.
\end{Remark}

\begin{proof}[Proof of Theorem~\ref{crofton}]
By Remark~\ref{orientationrem}, we choose ${\rm d}u\wedge {\rm d}v\wedge {\rm d}\theta$ as the orientation of $D$. Let $D=D^+\cup D^-$, where
\begin{gather*}
D^+:=\{ (p,\theta,t,u,v)\,|\,\langle E,e_2\rangle \geq 0\},\qquad
D^-:=\{ (p,\theta,t,u,v)\,|\,\langle E,e_2\rangle \leq 0\},\\
\Gamma:=D^+\cap D^-.
\end{gather*}
By the structure equation \eqref{mcsteq},
\begin{gather}
{\rm d}\big(\Theta\wedge \omega^1\big)={\rm d}\Theta \wedge \omega^1-\Theta\wedge {\rm d}\omega^1 \nonumber\\
\hphantom{{\rm d}\big(\Theta\wedge \omega^1\big)}{} =\big(2\omega^1\wedge\omega^2\big)\wedge\omega^1-\Theta\wedge \big({\omega_1}^2\wedge \omega^2\big) =\Theta\wedge \omega^2\wedge {\omega_1}^2.\label{d2to3}
\end{gather}
We also have
\begin{gather} \Theta\wedge \omega^1 = ({\rm d}z+x{\rm d}y-y{\rm d}x)\wedge\langle {\rm d}\mathbb{X},e_1\rangle \nonumber \\
\hphantom{\Theta\wedge \omega^1}{} =\big( (z_u+xy_u-yx_u){\rm d}u+(z_v+xy_v-yx_v){\rm d}v\big)\wedge \big( \langle \mathbb{X}_u,e_1\rangle {\rm d}u+\langle \mathbb{X}_v,e_1\rangle {\rm d}v\big) \nonumber\\
\hphantom{\Theta\wedge \omega^1}{}=\langle E,e_1\rangle {\rm d}u\wedge {\rm d}v.\label{thetawtouv}
\end{gather}
Now we integrate the kinematic density ${\rm d}L$ over the set $\mathcal{L}$. By using \eqref{otop}, \eqref{d2to3}, the Stock's theorem, and \eqref{thetawtouv}, we have
\begin{gather}
\int_{\ell\in\mathcal{L},\, \ell\cap\Sigma\neq\varnothing}n(\ell\cap \Sigma){\rm d}L =2\bigg(\int_{D^+}\pi^*{\rm d}L-\int_{D^-}\pi^*{\rm d}L\bigg) \nonumber\\
\hphantom{\int_{\ell\in\mathcal{L},\, \ell\cap\Sigma\neq\varnothing}n(\ell\cap \Sigma){\rm d}L}{}
=2\bigg(\int_{D^+} \omega^2\wedge \omega^2_1 \wedge \Theta-\int_{D^-} \omega^2\wedge \omega^2_1\wedge \Theta \bigg) \nonumber\\
\hphantom{\int_{\ell\in\mathcal{L},\, \ell\cap\Sigma\neq\varnothing}n(\ell\cap \Sigma){\rm d}L}{} =2\bigg(\int_{\partial D^+}\Theta\wedge \omega^1-\int_{\partial D^-}\Theta\wedge \omega^1\bigg)\nonumber\\
\hphantom{\int_{\ell\in\mathcal{L},\, \ell\cap\Sigma\neq\varnothing}n(\ell\cap \Sigma){\rm d}L}{}=2\bigg(\int_{\Gamma^+\cup\Gamma}\Theta\wedge \omega^1 -\int_{\Gamma^-\cup\Gamma}\Theta\wedge \omega^1\bigg),\label{allintegrals}
\end{gather}
where $\Gamma^{\pm}:=\partial D^{\pm}\setminus\Gamma.$ We also point out that the number, $2$, occurs in the f\/irst identity is due to the orientations for each horizontal line.

Next, we show that ${\rm d}u\wedge {\rm d}v=0$ on $\Gamma^{\pm}$. Indeed, by using the coordinates $\{(u,v,\theta)\}$ for the conf\/iguration space $D$, any vector f\/ield def\/ined on $\Gamma^+$ can be represented by $A\wedge\frac{\partial}{\partial \theta}\in \partial \Sigma\times S^1$ for some vector $A$ def\/ined on the tangent bundle $T\partial\Sigma$. The value ${\rm d}u\wedge {\rm d}v$ evaluated on $\Gamma^+$ must be
\begin{gather*}
{\rm d}u\wedge {\rm d}v\left(A\wedge \frac{\partial}{\partial\theta}\right)={\rm d}u(A){\cancel{{\rm d}v\left(\frac{\partial}{\partial\theta}\right)}}^{\ =0}-{\rm d}v(A){\cancel{{\rm d}u\left(\frac{\partial}{\partial\theta}\right)}}^{\ =0}=0.
\end{gather*}
Therefore, \eqref{allintegrals} becomes
\begin{gather*}
\int_{\ell\in\mathcal{L},\, \ell\cap\Sigma\neq\varnothing}n(\ell\cap \Sigma){\rm d}L =2\bigg( 2\int_{\Gamma}\Theta\wedge \omega^1 +\int_{\Gamma^+}\Theta\wedge \omega^1-\int_{\Gamma^-}\Theta\wedge \omega^1 \bigg)\\
\hphantom{\int_{\ell\in\mathcal{L},\, \ell\cap\Sigma\neq\varnothing}n(\ell\cap \Sigma){\rm d}L}{} =4\int_\Gamma\Theta\wedge \omega^1 =4\int_{\Gamma}|E|{\rm d}u\wedge {\rm d}v=4\cdot \pa(\Sigma),
\end{gather*} we have used \eqref{thetawtouv} and $E$ is parallel to $e_1$ on $\Gamma$ at the third equality.
\end{proof}

\subsection*{Acknowledgements}
The f\/irst and second authors' research was supported by NCTS grant NSC-100-2628-M-008-001-MY4. They would like to express their appreciation to Professors Jih-Hsin Cheng and Paul Yang for their interests in this work and inspiring discussions. The third author would like to express her thanks to Professor Shu-Cheng Chang for his teaching, constant encouragement, and support. We all thank the anonymous referees for their careful reading of our manuscript and their many insightful comments and suggestions to improve the paper.

\pdfbookmark[1]{References}{ref}
\LastPageEnding

\end{document}